\renewcommand\section{\@startsection{section}{1}{0mm}{-1.5\baselineskip}{\baselineskip}{\normalsize\bfseries\sffamily}}
\renewcommand\subsection{\@startsection{subsection}{1}{0mm}{-\baselineskip}{\baselineskip}{\normalsize\bfseries\sffamily}}
\def\@fnsymbol#1{\ensuremath{\ifcase#1\or *\or **\or \dagger\or \ddagger\or
   \mathsection\or \mathparagraph\or \|\or \dagger\dagger
   \or \ddagger\ddagger \else\@ctrerr\fi}}
\newlength{\preskip}
\newlength{\postskip}
\newtheoremstyle{theorem}{\preskip}{\postskip}{\itshape}{}{\bfseries}{}
{.5em}{\textbf{\thmname{#1}\thmnumber{ #2} (\thmnote{ #3})}}
\newtheoremstyle{definition}{\preskip}{\postskip}{\normalfont}{0pt}{\bfseries}{}{.5em}{}
\newtheoremstyle{remark}{\preskip}{\postskip}{\normalfont}{0pt}{\bfseries}{}{.5em}{}
\theoremstyle{theorem} \newtheorem{thm}{Theorem}[section]
\theoremstyle{theorem} \newtheorem{lem}[thm]{Lemma}
\theoremstyle{theorem} \newtheorem{prop}[thm]{Proposition}
\theoremstyle{theorem} \newtheorem{kor}[thm]{Corollary}
\theoremstyle{definition} \newtheorem{defn}[thm]{Definition}
\theoremstyle{remark} \newtheorem{bem_thm}[thm]{Remark}
\theoremstyle{remark} 
\theoremstyle{definition} 
\theoremstyle{definition} 
\theoremstyle{remark} 
\theoremstyle{remark} 
\theoremstyle{definition}  
\theoremstyle{definition}  
\theoremstyle{definition}
\DeclareMathOperator \id {id}
\DeclareMathOperator \spt {supp}
\DeclareMathOperator \tr {tr}
\newcommand{\I}{\mathds{1}}
\newcommand\fa{\qquad \text{for all \ }}
\newcommand{\cadlag}{c\`adl\`ag }
\newcommand\mc[1] {\mathcal{#1}}
\newcommand\mbb[1] {\mathds{#1}}
\newcommand{\eps}{\varepsilon}
\newcommand{\ac}{\text{ac}}
\newcommand{\sem}{\text{sem}}
\begin{document}

\title[Solutions to HJB equations associated with sublinear L\'evy(-type) processes]{Viscosity solutions to Hamilton--Jacobi--Bellman equations associated with sublinear L\'evy(-type) processes}
\author[F.~K\"{u}hn]{Franziska K\"{u}hn} 
\address[F.~K\"{u}hn]{Institut de Math\'ematiques de Toulouse, Universit\'e Paul Sabatier III Toulouse, 118 Route de Narbonne, 31062 Toulouse, France}
\email{franziska.kuhn@math.univ-toulouse.fr}
\subjclass[2010]{Primary:60J25. Secondary: 60G51,60J35,35D40,49L25,47H20,47J35}
\keywords{non-linear semigroup, L\'evy process, L\'evy-type process, viscosity solution, Hamilton-Jacobi-Bellman equation, sublinear expectation, Kolmogorov backward equation, maximal inequality.}
\begin{abstract}
	Using probabilistic methods we study the existence of viscosity solutions to  non-linear integro-differential equations \begin{align*} 
	\partial_t u(t,x) - \sup_{\alpha \in I} &\bigg( b_{\alpha}(x) \cdot \nabla_x u(t,x) + \frac{1}{2} \tr\left(Q_{\alpha}(x) \cdot \nabla^2_x u(t,x)\right) \\
	& +\int_{y \neq 0} \big(u(t,x+y)-u(t,x)-\nabla_x u(t,x) \cdot h(y) \big) \, \nu_{\alpha}(x,dy) \bigg) = 0
	\end{align*}
	with initial condition $u(0,x)= \varphi(x)$; here $(b_{\alpha}(x),Q_{\alpha}(x),\nu_{\alpha}(x,dy))$, $\alpha \in I$, $x \in \mathbb{R}^d$, is a family of L\'evy triplets and $h$ is some truncation function. The solutions, which we construct, are of the form $u(t,x) = T_t \varphi(x)$ for a sublinear Markov semigroup $(T_t)_{t \geq 0}$ with representation 
	\begin{equation*}
	T_t \varphi(x) =  \mathcal{E}^x \varphi(X_t):= \sup_{\mathbb{P} \in \mathfrak{P}_x} \int_{\Omega} \varphi(X_t) \, d\mathbb{P}
	\end{equation*}
	where $(X_t)_{t \geq 0}$ is a stochastic process and $\mathfrak{P}_x$, $x \in \mathbb{R}^d$, are families of probability measures. The key idea is to exploit the connection between sublinear Markov semigroups and the associated Kolmogorov backward equation. In particular, we obtain new existence and uniqueness results for viscosity solutions to Kolmogorov backward equations associated with L\'evy(-type) processes for sublinear expectations and Feller processes on classical probability spaces.
\end{abstract}
\maketitle

\section{Introduction}

Markov processes and the semigroups generated by them play an important role in the study of evolution equations. For a Markov process $(X_t)_{t \geq 0}$ with semigroup $T_t \varphi(x) := \mbb{E}^x \varphi(X_t)$ it is well-known that -- under suitable assumptions -- the mapping $u(t,x) := T_t \varphi(x)$ is a solution to the Kolmogorov backward equation \begin{equation}
	\frac{\partial}{\partial t} u(t,x) - A_x u(t,x) = 0 \qquad u(0,x) = \varphi(x) \label{intro-eq1}
\end{equation}
where $A=A_x$ is the infinitesimal generator of the semigroup $(T_t)_{t \geq 0}$. For instance if $\varphi \in C_c^{\infty}(\mbb{R}^d)$ is a compactly supported smooth function and $(X_t)_{t \geq 0}$ is a L\'evy process \cite{sato} or a ``good'' Feller process \cite{ltp,jac2} with infinitesimal generator $A$, then $u(t,x) = \mbb{E}^x \varphi(X_t)$ solves the evolution equation \eqref{intro-eq1}, and $A|_{C_c^{\infty}(\mbb{R}^d)}$ is a pseudo-differential operator with a representation of the form \begin{equation*}
	Af(x) = b(x) \cdot \nabla f(x) + \frac{1}{2} \tr\left(Q(x) \cdot \nabla^2 f(x)\right) + \int_{y \neq 0} \big( f(x+y)-f(x)-\nabla f(x) \cdot h(y) \big) \, \nu(x,dy)
\end{equation*}
where $h$ is some truncation function and $(b(x),Q(x),\nu(x,dy))$ is a L\'evy triplet for each fixed $x \in \mbb{R}^d$, see Section~\ref{pre} for details.  \par
In this paper, we are interested in evolution equations for \emph{sub}linear Markov semigroups, that is, semigroups $(T_t)_{t \geq 0}$ of sublinear operators $T_t$, see Definition~\ref{main-0} for the precise definition. They appear naturally in the study of Markov processes on sublinear expectation spaces; quite often $(T_t)_{t \geq 0}$ has a representation of the form \begin{equation*}
	T_t \varphi(x) = \mc{E}^x \varphi(X_t) := \sup_{\mbb{P} \in \mathfrak{P}_x} \int_{\Omega} \varphi (X_t) \, d\mbb{P}
\end{equation*}
for families of probability measures $\mathfrak{P}_x$, $x \in \mbb{R}^d$. As in the case of classical Markov semigroups, it is possible to associate an evolution equation \eqref{intro-eq1} with a sublinear Markov semigroup $(T_t)_{t \geq 0}$, cf.\ Hollender \cite{julian}. In this paper we exploit the connection between sublinear Markov semigroups and the associated evolution equation to establish new existence and uniqueness results for solutions to non-local non-linear Hamilton--Jacobi--Bellman (HJB) equations \begin{align} \label{intro-eq3} \begin{aligned}
\partial_t u(t,x) -  \sup_{\alpha \in I} &\bigg( b_{\alpha}(x) \cdot \nabla_x u(t,x) + \frac{1}{2} \tr\left(Q_{\alpha}(x) \cdot \nabla^2_x u(t,x)\right) \\
& +\int_{y \neq 0} \big(u(t,x+y)-u(t,x)-\nabla_x u(t,x) \cdot h(y) \big) \, \nu_{\alpha}(x,dy) \bigg) = 0\end{aligned}
\end{align}
using probabilistic methods; here $I$ is an index set and $(b_{\alpha}(x),Q_{\alpha}(x),\nu_{\alpha}(x,dy))$ is a L\'evy triplet for each $x \in \mbb{R}^d$ and $\alpha \in I$. In particular, we will see that HJB equations  \eqref{intro-eq3} appear in probability theory as Kolmogorov backward equations of Markov processes for sublinear expectations. The stochastic processes, which we study in this paper, can be interpreted as generalizations of classical L\'evy-type processes under uncertainty in their semimartingale characteristics. \par
Non-linear integro-partial differential equations \eqref{intro-eq3} have attracted a lot of attention in the last years. In particular, there has been a substantial progess in extending the highly developped viscosity solution theory for second order \emph{local} equations to second order \emph{non-local} equations. Since there is a large amount of literature, let us just mention a few fundamental results. Important contributions to comparison principles were obtained by Alvarez \& Tourin \cite{alvarez}, Barles \& Imbert \cite{barles07} and Jakobsen \& Karlsen \cite{jak04}. These works use mostly analytical approaches, and they differ in the exact form of the admissible equations, the behaviour of the solutions at infinity as well as in the assumptions on the continuity of the coefficients $(b_{\alpha}(x),Q_{\alpha}(x),\nu_{\alpha}(x,dy))$ and on the singularities of the measures $\nu_{\alpha}(x,dy)$. Recently, Hollender \cite{julian} succeeded in relaxing the assumptions from \cite{jak04} and obtained a quite general comparison principle. Barles, Chasseigne \& Imbert \cite{barles10} study the regularity of viscosity solutions with respect to the space variable $x$, and \cite{jak05} investigates the continuous dependence on the coefficients $(b_{\alpha}(x),Q_{\alpha}(x),\nu_{\alpha}(x,dy))$. There are close connections between HJB equations and several areas of probability theory, e.\,g.\ backwards stochastic differential equations \cite{barles97} and stochastic control with jumps, cf.\ \cite{biswas,pham} and the references therein.  \par
The approach which we use in this paper goes back to Shige Peng; he was one of the first researchers to investigate Kolmogorov equations associated with sublinear Markov processes. In his pioneering work \cite{peng} Peng constructed the so-called G-Brownian motion: a continuous stochastic process with independent and stationary increments whose associated evolution equation is the G-heat equation\begin{equation*}
	\partial_t u(t,x)- \sup_{\alpha \in I} \left( \frac{1}{2} \tr(Q_{\alpha} \cdot \nabla^2_x u(t,x)) \right)=0 
\end{equation*}
for a given index set $I$ and a family $Q_{\alpha}$, $\alpha \in I$, of positive semi-definite symmetric matrices. His approach was generalized in Hu \& Peng \cite{hu} who constructed a class of stochastic processes with independent and stationary increments such that the associated Kolmogorov backward equation is given by the non-local non-linear equation \begin{align} \label{intro-eq7} \begin{aligned}
	\partial_t u(t,x) - \sup_{\alpha \in I} \bigg( b_{\alpha} \cdot &\nabla_x u(t,x) + \frac{1}{2} \tr(Q_{\alpha} \cdot \nabla_x^2 u(t,x)) \\
	&\quad+ \int_{y \neq 0} \big( u(t,x+y)-u(t,x)-\nabla_x u(t,x) \cdot h(y) \big) \, \nu_{\alpha}(dy) \bigg)=0 \end{aligned}
\end{align}
for a family $(b_{\alpha},Q_{\alpha},\nu_{\alpha}(dy))$, $\alpha \in I$, of L\'evy triplets. In their construction it is a-priori unclear how one should define the associated semigroup $T_t \varphi(x)$ for functions $\varphi$ which are not continuous. Neufeld \& Nutz \cite{nutz} studied sublinear Markov semigroups of the form \begin{equation*}
	T_t \varphi(x) := \mc{E}\varphi(x+X_t) := \sup_{\mbb{P}} \int_{\Omega} \varphi(x+X_t) \, d\mbb{P}
\end{equation*}
where the supremum is taken over probability measures $\mbb{P}$ such that $(X_t)_{t \geq 0}$ is a $\mbb{P}$-semimartingale with differential characteristics taking value in a given family $(b_{\alpha},Q_{\alpha},\nu_{\alpha})$, $\alpha \in I$, of L\'evy triplets; they showed that $u(t,x) := T_t \varphi(x)$ is a viscosity solution to \eqref{intro-eq7} under the assumption that $\varphi$ is Lipschitz continuous and bounded, and \begin{equation*}
	\sup_{\alpha \in I} \left( |b_{\alpha}| + |Q_{\alpha}| + \int_{y \neq 0} \min\{|y|,|y|^2\} \, \nu_{\alpha}(dy) \right)<\infty. 
\end{equation*}
Using a version of Kolmogorov's extension theorem for non-linear expectations, Denk et al.\ \cite{denk} recently established under the weaker condition 
\begin{equation*}
	\sup_{\alpha \in I} \left( |b_{\alpha}| + |Q_{\alpha}| + \int_{y \neq 0} \min\{1,|y|^2\} \, \nu_{\alpha}(dy) \right)<\infty
\end{equation*}
the existence of a viscosity solution to \eqref{intro-eq7} for initial conditions $u(0,x) = \varphi(x)$ which are bounded and uniformly continuous; we will recover this statement as a particular case of our main result, cf.\ Corollary~\ref{main-7}. \par
This paper builds on the PhD thesis \cite{julian} by Hollender who obtained many new insights on Markov processes for sublinear expectations. In particular, he generalized the approach by Nutz \& Neufeld \cite{nutz} to HJB equations \eqref{intro-eq3} with state-space dependent coefficients $(b_{\alpha}(x),Q_{\alpha}(x), \nu_{\alpha}(x,dy))$, and succeeded in constructing a class of sublinear Markov processes with associated evolution equation \eqref{intro-eq3}. Compared to \cite{julian}, our results require weaker regularity and integrability assumptions, see the discussion in Section~\ref{main} for details. The key tool to relax the assumptions from \cite{julian} is a maximal inequality which allows us to estimate expressions of the form \begin{equation*}
	\sup_{\mbb{P} \in \mathfrak{P}_x} \mbb{P} \left( \sup_{s \leq t} |X_s-x| > r\right)
\end{equation*}
for a family of probability measures $\mathfrak{P}_x$, cf.\ Section~\ref{max}. \par \medskip

This article is organized as follows. After introducing basic notation and definitions in Section~\ref{pre}, we recall some results on sublinear Markov semigroups at the beginning of Section~\ref{main}. In Section~\ref{main} we also state and discuss our main result, Theorem~\ref{main-3}. Several applications of Theorem~\ref{main-3} will be presented in Section~\ref{ex}; we will study Kolmogorov backward equations associated with L\'evy and L\'evy-type processes for sublinear expectations and evolution equations associated with classical Feller processes. In Section~\ref{max} we will establish the maximal inequality which is a crucial tool for the proofs which will be presented in Section~\ref{p}.

\section{Preliminaries} \label{pre} 

We consider the Euclidean space $\mbb{R}^d$ endowed with the Borel $\sigma$-algebra $\mc{B}(\mbb{R}^d)$ and write $B(x,r)$ for the open ball centered at $x \in \mbb{R}^d$ with radius $r>0$. If $A \in \mbb{R}^{d \times d}$ is a matrix, then $A^T$ is the transpose of $A$ and $\tr(A)$ is the trace of $A$. The gradient and Hessian of a function $f: \mbb{R}^d \to \mbb{R}$ are denoted by $\nabla f$ and $\nabla^2 f$, respectively, and on $C_b^2(\mbb{R}^d)$, the space of functions with bounded derivatives up to order $2$, we define a norm by  \begin{equation*}
	\|f\|_{(2)} := \|f\|_{\infty} + \|\nabla f\|_{\infty} + \|\nabla^2 f\|_{\infty}. 
\end{equation*}
The space of bounded Borel measurable functions $f: \mbb{R}^d \to \mbb{R}$ is denoted by $\mc{B}_b(\mbb{R}^d)$. A function $f: [0,\infty) \to \mbb{R}^d$ is in the Skorohod space $D([0,\infty), \mbb{R}^d)$ if $f$ is c\`adl\`ag, i.\,e.\ $f$ is right-continuous and has finite left-hand limits in $\mbb{R}^d$.  We will use the shorthand \begin{equation*}
	D_x := D_x([0,\infty), \mbb{R}^d) := \{f \in D([0,\infty), \mbb{R}^d); f(0)=x\}.
\end{equation*}
For a probaility measure $\mbb{P}$ we denote by $\mbb{E}:=\mbb{E}_{\mbb{P}} := \int \, d\mbb{P}$ the expectation with respect to $\mbb{P}$. We use $\lambda$ to denote the Lebesgue measure. Throughout this paper, we denote by $h$ a truncation function, i.\,e.\ a bounded mapping $h:\mbb{R}^d \to \mbb{R}^d$ with bounded support such that $h(x)=x$ in a neighborhood of $0$. \par
Let $(\Omega,\mc{A},\mbb{P})$ be a probability space. If $B_t: \Omega \to \mbb{R}^d$ is a predictable \cadlag bounded variation process, $C_t: \Omega \to \mbb{R}^{d \times d}$ a continuous bounded variation process and $F$ a predictable random measure on $[0,\infty) \times \mbb{R}^d$, then the triplet $(B,C,F)$ is called the \emph{(predictable) semimartingale characteristics} (with respect to a truncation function $h$) of a semimartingale $(X_t)_{t \geq 0}$ if the process \begin{align*}
	f(X_t)-f(X_0)-& \sum_{j=1}^d \int_0^t \partial_{x_j} f(X_{s-}) \, dB_s^j - \sum_{i,j=1}^d \int_0^t \partial_{x_i} \partial_{x_j} f(X_{s-}) \, dC_s^{i,j} \\
	& - \int_0^t\!\!\int_{y \neq 0} \left( f(X_{s-}+y)-f(X_{s-})- \nabla f(X_{s-}) \cdot h(y) \right) \, F(dy,ds)
\end{align*}
is a local martingale for any $f \in C_b^2(\mbb{R}^d)$. If the triplet $(B,C,F)$ is absolutely continuous with respect to Lebesgue measure, in the sense that \begin{equation*}
	dB_t = b_t \, dt \quad dC_t = Q_t \, dt \quad F(dy,dt) = \nu_t(dy) \, dt
\end{equation*}
for predictable processes $(b_t)_{t \geq 0}$, $(Q_t)_{t \geq 0}$ and a predictable kernel $\nu$, then we call $(b_t,Q_t,\nu_t)$ the \emph{differential characteristics} of $(X_t)_{t \geq 0}$; we write $(b_t^{\mbb{P}},Q_t^{\mbb{P}},\nu_t^{\mbb{P}})$ if we need to emphasize the underlying probability measure $\mbb{P}$. We denote by $\mathfrak{P}_{\sem}^{\ac}(\Omega)$ the family of probability measures $\mbb{P}$ on $\Omega$ such that $(X_t)_{t \geq 0}$ is a semimartingale (with respect to $\mbb{P}$) which has absolutely continuous semimartingale characteristics with respect to Lebesgue measure. For a thorough discussion of semimartingales and their characteristics we refer to Jacod \cite{jacod}.   \par
A \emph{pseudo-differential operator with continuous negative definite symbol} is an operator $A: C_c^{\infty}(\mbb{R}^d) \to \mbb{R}$ of the form 
\begin{equation}
	A f(x)= -q(x,D) f(x) := - \int_{\mbb{R}^d} q(x,\xi) e^{ix \cdot \xi} \hat{f}(\xi) \, d\xi, \qquad x \in \mbb{R}^d, \, f \in C_c^{\infty}(\mbb{R}^d) \label{pseudo}
\end{equation}
where $\hat{f}(\xi) = (2\pi)^{-d} \int_{\mbb{R}^d} f(x) e^{-ix \cdot \xi} \, dx$ denotes the Fourier transform of $f$ and the \emph{symbol} $q(x,\xi)$ is a continuous negative definite function for each $x \in \mbb{R}^d$, i.\,e.\ 
\begin{equation}
	q(x,\xi) = -ib(x) \cdot \xi + \frac{1}{2} \xi \cdot Q(x) \xi + \int_{y \neq 0} \left( 1-e^{iy \cdot \xi} + i \xi \cdot h(y) \right) \, \nu(x,dy), \quad x,\xi \in \mbb{R}^d, \label{sym}
\end{equation}
here $h$ is a given truncation function and $(b(x),Q(x),\nu(x,dy))$ is for each $x \in \mbb{R}^d$ a \emph{L\'evy triplet} consisting of a vector $b(x) \in \mbb{R}^d$ (\emph{drift vector}), a symmetric positive semidefinite matrix $Q(x) \in \mbb{R}^{d \times d}$ (\emph{diffusion matrix}) and a \emph{L\'evy measure} $\nu(x,dy)$, i.\,e.\ measure on $(\mbb{R}^d \backslash \{0\}, \mc{B}(\mbb{R}^d \backslash \{0\}))$ satisfying $\int_{y \neq 0} \min\{1,|y|^2\} \, \nu(x,dy)<\infty$. We call $(b,Q,\nu)$ is the \emph{characteristics} of $q$. For a fixed truncation function $h$ the characteristics $(b,Q,\nu)$ is uniquely determined by $q$; note that only the drift $b(x)$ depends on the choice of $h$. Using properties of the Fourier transform it follows readily that 
\begin{align*}
	Af(x) &= b(x) \cdot \nabla f(x) + \frac{1}{2} \tr\left(Q(x) \cdot \nabla^2 f(x)\right) + \int_{y \neq 0} \left( f(x+y)-f(x)- \nabla f(x) \cdot h(y) \right) \, \nu(x,dy),
\end{align*}
and therefore $A$ extends naturally to $C_b^2(\mbb{R}^d)$. Pseudo-differential operators with negative definite symbol play an important role in the study of Feller processes, see e.\,g.\ the monograps \cite{ltp,jac2,matters} for details, and in the context of stochastic differential equations, see e.\,g.\ \cite{sde,kurtz}. If the characteristics $(b,Q,\nu)$ (and hence the symbol $q$) does not depend on $x$, then $q$ is the characteristic exponent of a \emph{L\'evy process}, i.\,e.\ a stochastic process with \cadlag sample paths and independent and stationary increments, and $(b,Q,\nu)$ is its L\'evy triplet with respect to the truncation function $h$, cf.\ Sato \cite{sato} and also Khoshnevisan \& Schilling \cite{barca}. \par
In this paper we study Hamilton--Jacobi--Bellman equations of the form \begin{equation}
	\frac{\partial}{\partial t} u(t,x)- \sup_{\alpha \in I} A^{\alpha}_x u(t,x) = 0 \label{hjb}
\end{equation}
where $A^{\alpha}$ is for each $\alpha \in I$ a pseudo-differential operator; we write $A^{\alpha}_x$ to emphasize that $A^{\alpha}$ acts with respect to the space variable $x$. In general, there do not exist classical solutions to \eqref{hjb}. We will work with the weaker notion of viscosity solutions which was originally introduced by Crandall \& Lions \cite{lions} and Evans \cite{evans}. The following definition is taken from \cite{julian}; for a discussion of equivalent definitions we refer the reader to \cite[Chapter 2]{julian} and \cite{barles07}.

\begin{defn} \label{pre-21}
	Let $A: \mc{D}(A) \to \mbb{R}$ be an operator with domain $\mc{D}(A)$ containing the space of smooth functions with bounded derivatives $C_b^{\infty}(\mbb{R}^d)$. An upper semicontinuous function $u: [0,\infty) \times \mbb{R}^d \to \mbb{R}$ is a \emph{viscosity subsolution} to the equation \begin{equation*}
	\partial_t u(t,x)- A_x u(t,x)=0 
	\end{equation*}
	if the inequality $\partial_t \varphi(t,x)-A_x \varphi(t,x) \leq 0$ holds for any function $\varphi \in C_b^{\infty}([0,\infty) \times \mbb{R}^d)$ such that $u-\varphi$ has a global maximum in $(t,x) \in (0,\infty) \times \mbb{R}^d$ with $u(t,x) = \varphi(t,x)$. A mapping $u$ is a \emph{viscosity supersolution} if $-u$ is a viscosity subsolution. If $u$ is both a viscosity sub- and supersolution, then $u$ is called \emph{viscosity solution}.
\end{defn}

\section{Main result} \label{main}

In order to state our main result, Theorem~\ref{main-3}, we first need to introduce Markov sublinear semigroups and their associated evolution equation.

\begin{defn}  \label{main-0} 
	Let $\mc{H}$ be a convex cone of functions $f: \mbb{R}^d \to \mbb{R}$ containing all constant functions. A family of sublinear operators $T_t: \mc{H} \to \mc{H}$, $t \geq 0$, is a \emph{sublinear Markov semigroup (on $\mc{H}$)} if it satisfies the following properties. \begin{enumerate}
		\item $(T_t)_{t \geq 0}$ has the semigroup property, i.\,e.\ $T_{t+s} = T_t T_s$ for all $s,t \geq 0$ and $T_0 = \id$,
		\item $T_t$ is monotone for each $t \geq 0$,  i.\,e.\ $f,g \in \mc{H}$, $f \leq g$ implies $T_tf\leq T_tg$,
		\item $T_t$ preserves constants for each $t \geq 0$, i.\,e.\ $T_t(c)=c$ for all $c \in \mbb{R}$.
	\end{enumerate}  
\end{defn}

Following \cite{julian} we associate the \emph{sublinear infinitesimal generator} $A: \mc{D}(A) \to \mc{H}$ with the sublinear Markov semigroup $(T_t)_{t \geq 0}$, \begin{equation*}
	Af(x) := \lim_{t \downarrow 0} \frac{T_t f(x)-f(x)}{t}, \qquad x \in \mbb{R}^d, f \in \mc{D}(A),
\end{equation*}
where \begin{equation*}
	\mc{D}(A) := \left\{f \in \mc{H}; \exists g \in \mc{H} \, \, \forall x \in \mbb{R}^d: \, \, g(x) = \lim_{t \downarrow 0} \frac{T_t f(x)-f(x)}{t} \right\}
\end{equation*}
is the domain of $A$. Hollender \cite[Proposition 4.10]{julian} established the following fundamental result which associates to the sublinear Markov semigroup $(T_t)_{t \geq 0}$ a evolution equation. It plays a key role in the proof of our main result.

\begin{thm} \label{pre-33}
	Let $(T_t)_{t \geq 0}$ be a sublinear Markov semigroup on a convex cone $\mc{H}$ of real-valued functions on $\mbb{R}^d$ containing all constants functions, and let $A: \mc{D}(A) \to \mc{H}$ be its generator. If $C_b^{\infty}(\mbb{R}^d) \subseteq \mc{D}(A)$ and if $f \in \mc{H}$ is such that $(t,x) \mapsto u(t,x) := T_t f(x)$ is continuous, then $u$ is a viscosity solution (in the sense of Definition~\ref{pre-21}) to  \begin{equation*}
		\partial_t u(t,x) - A_x u(t,x) = 0 
	\end{equation*}
	under $u(0,x)= f(x)$.
\end{thm}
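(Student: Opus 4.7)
The plan is to exploit three algebraic features of the sublinear Markov semigroup: (a) the semigroup law $u(t_0,\cdot)=T_h u(t_0-h,\cdot)$; (b) the monotonicity, which converts the touching inequality $u(t_0-h,\cdot)\leq\varphi(t_0-h,\cdot)$ into an inequality between $T_h u(t_0-h,\cdot)$ and $T_h\varphi(t_0-h,\cdot)$; and (c) subadditivity, positive homogeneity and constant preservation, which together with a Taylor expansion of $\varphi$ in time let us compare $T_h\varphi(t_0-h,\cdot)$ with $T_h\varphi(t_0,\cdot)$. Sending $h\downarrow 0$ and using $C_b^\infty(\mbb{R}^d)\subseteq\mc{D}(A)$ will then identify the limit with $A_{x_0}\varphi(t_0,\cdot)(x_0)$. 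The initial condition $u(0,x)=f(x)$ is immediate from $T_0=\id$.

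For the subsolution property, fix $\varphi\in C_b^\infty([0,\infty)\times\mbb{R}^d)$ such that $u-\varphi$ attains a global maximum at $(t_0,x_0)\in(0,\infty)\times\mbb{R}^d$ with $u(t_0,x_0)=\varphi(t_0,x_0)$. For $h\in(0,t_0]$, (a) and (b) yield
\[
\varphi(t_0,x_0)=u(t_0,x_0)=T_h u(t_0-h,\cdot)(x_0)\leq T_h\varphi(t_0-h,\cdot)(x_0).
\]
Subtracting $\varphi(t_0-h,x_0)$ and dividing by $h$ reduces everything to showing
\[
\lim_{h\downarrow 0}\frac{T_h\varphi(t_0-h,\cdot)(x_0)-\varphi(t_0-h,x_0)}{h}=A_{x_0}\varphi(t_0,\cdot)(x_0),
\]
since the left-hand side of the preceding display, divided by $h$, tends to $\partial_t\varphi(t_0,x_0)$. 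To prove this, set $g(y):=\varphi(t_0,y)$ and $g_h(y):=\varphi(t_0-h,y)$ (both in $C_b^\infty\subseteq\mc{D}(A)$) and decompose
\[
\frac{T_h g_h(x_0)-g_h(x_0)}{h}=\frac{T_h g_h(x_0)-T_h g(x_0)}{h}+\frac{T_h g(x_0)-g(x_0)}{h}+\frac{g(x_0)-g_h(x_0)}{h}.
\]
The middle term converges to $Ag(x_0)=A_{x_0}\varphi(t_0,\cdot)(x_0)$ by the definition of $A$; the third converges to $\partial_t\varphi(t_0,x_0)$. The first term is handled by writing $g_h=g-h\partial_t\varphi(t_0,\cdot)+R_h$ with $\|R_h\|_\infty=O(h^2)$ and applying subadditivity, positive homogeneity, monotonicity and the identity $T_h(f+c)=T_h f+c$ for constants $c\in\mbb{R}$ (a direct consequence of sublinearity and constant preservation) to get
\[
-hT_h\bigl(\partial_t\varphi(t_0,\cdot)\bigr)(x_0)-O(h^2)\leq T_h g_h(x_0)-T_h g(x_0)\leq hT_h\bigl(-\partial_t\varphi(t_0,\cdot)\bigr)(x_0)+O(h^2);
\]
since $\partial_t\varphi(t_0,\cdot)\in\mc{D}(A)$ yields $T_h f(x_0)\to f(x_0)$ as $h\downarrow 0$, the first term tends to $-\partial_t\varphi(t_0,x_0)$, and the three limits sum to $A_{x_0}\varphi(t_0,\cdot)(x_0)$. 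Hence $\partial_t\varphi(t_0,x_0)-A_{x_0}\varphi(t_0,\cdot)(x_0)\leq 0$. The supersolution property is obtained by the same argument applied to $\psi\in C_b^\infty$ touching $u$ from below: reversing the direction of monotonicity yields $\partial_t\psi(t_0,x_0)\geq A_{x_0}\psi(t_0,\cdot)(x_0)$, which is stronger than the condition required by Definition~\ref{pre-21} (namely $\partial_t\psi(t_0,x_0)+A_{x_0}(-\psi(t_0,\cdot))(x_0)\geq 0$), since sublinearity together with $A(0)=0$ forces $A(-\psi)\geq -A\psi$.

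The main obstacle is the first summand $[T_h g_h(x_0)-T_h g(x_0)]/h$. The naive estimate $|T_h g_h-T_h g|\leq\|g_h-g\|_\infty=O(h)$ is only $O(1)$ after division by $h$ and would ruin the argument; one really must exploit the finer Taylor structure $g_h-g=-h\,\partial_t\varphi(t_0,\cdot)+O(h^2)$ together with positive homogeneity $T_h(hf)=hT_hf$ to extract the exact limit $-\partial_t\varphi(t_0,x_0)$, which then cancels the contribution $+\partial_t\varphi(t_0,x_0)$ produced by the third summand and leaves precisely $A_{x_0}\varphi(t_0,\cdot)(x_0)$. All other steps are routine manipulations with the semigroup axioms and the smoothness of $\varphi$.
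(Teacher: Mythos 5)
Your proof is correct, and it follows essentially the same route as the source: the paper itself does not prove Theorem~\ref{pre-33} but quotes it from Hollender \cite[Proposition 4.10]{julian}, whose argument is precisely this combination of the semigroup law at the touching time, monotonicity to pass from $u$ to the test function, sublinearity plus constant preservation and positive homogeneity to control the time-shifted test function via the Taylor remainder, and the definition of the generator on $C_b^\infty(\mbb{R}^d)\subseteq\mc{D}(A)$. Your handling of the supersolution case (deriving $\partial_t\psi\geq A_x\psi$ and then using $A(-\psi)\geq -A\psi$, which follows from subadditivity and $A(0)=0$, to match the definition via $-u$) is also sound.
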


Throughout the remaining part of this section, we use the canonical model, i.\,e.\ we denote by $(X_t)_{t \geq 0}$ the canonical process $X_t(\omega) := \omega(t)$, $\omega \in \Omega,$ on the Skorohod space $\Omega := D([0,\infty), \mbb{R}^d)$. Moreover, we write $\mathfrak{P}_{\sem}^{\ac}(D_x)$ for the family of probability measures $\mbb{P}$ on $D_x$ such that the canonical process $(X_t)_{t \geq 0}$ is a semimartingale (with respect to $\mbb{P}$) which has absolutely continuous semimartingale characteristics with respect to Lebesgue measure, cf.\ Section~\ref{pre}. In our main result, Theorem~\ref{main-3}, we study family of sublinear operators $(T_t)_{t \geq 0}$ of the form \begin{equation*}
	T_t f(x) = \sup_{\mbb{P} \in \mathfrak{P}_x} \mbb{E}_{\mbb{P}} f(X_t) 
\end{equation*}
where $\mathfrak{P}_x$ is for each $x \in \mbb{R}^d$ a set of probability measures on the Skorohod space satisfying
\begin{equation}
	\mathfrak{P}_x \subseteq \left\{ \mbb{P} \in \mathfrak{P}_{\sem}^{\ac}(D_x); (b_s^{\mbb{P}},Q_s^{\mbb{P}},\nu_s^{\mbb{P}})(\omega) \in \bigcup_{\alpha \in I} \{(b_{\alpha},Q_{\alpha},\nu_{\alpha})(X_s(\omega))\} \, \, \lambda(ds)\times \mbb{P}\text{-a.s.}\right\} \label{main-eq7}
\end{equation}
for a family of L\'evy triplets $(b_{\alpha}(x),Q_{\alpha}(x),\nu_{\alpha}(x,dy))$, $\alpha \in I$, $x \in \mbb{R}^d$, which is uniformly bounded on compact sets, i.\,e.\ 
\begin{equation}
	\forall r>0: \quad M_r:=\sup_{\alpha \in I} \sup_{|x| \leq r} \left( |b_{\alpha}(x)|+ |Q_{\alpha}(x)| + \int_{y \neq 0} \min\{|y|^2, 1\} \, \nu_{\alpha}(x,dy) \right) < \infty, \label{main-eq4}
\end{equation}
Following \cite{julian} we call the family of L\'evy triplets $(b_{\alpha}(x),Q_{\alpha}(x),\nu_{\alpha}(x,dy))$, $\alpha \in I$, $x \in \mbb{R}^d$, the \emph{uncertainty coefficients} and the families of probability measures $\mathfrak{P}_x$ \emph{uncertainty subsets}. In the sequel we will impose the following conditions.
\begin{enumerate}[label*=\upshape (C\arabic*),ref=\upshape C\arabic*] 
	\item\label{C4} $T_t f(x) := \mc{E}^x f(X_t) := \sup_{\mbb{P} \in \mathfrak{P}_x} \mbb{E}_{\mbb{P}} f(X_t)$ defines a sublinear Markov semigroup on a convex cone $\mc{H}$ of real-valued bounded functions containing all constant functions,
	\item\label{C5} For each $\alpha \in I$ there exists a measure $\mbb{P}^{\alpha} \in \mathfrak{P}_x$ with differential characteristics \begin{equation*}
		(b_s^{\mbb{P}_{\alpha}},Q_s^{\mbb{P}_{\alpha}},\nu_s^{\mbb{P}_{\alpha}}) := (b_{\alpha}(X_{s-}),Q_{\alpha}(X_{s-}),\nu_{\alpha}(X_{s-},\cdot)) \quad \lambda(ds)\times\mbb{P}\text{-a.s.} 
	\end{equation*}
	\item\label{C2} $K \ni x \mapsto b_{\alpha}(x)$, $\alpha \in I$, and $K \ni x \mapsto Q_{\alpha}(x)$, $\alpha \in I$, are uniformly equi-continuous,
	\item\label{C3} $K \ni x \mapsto \int_{y \neq 0} g(y) \, \nu_{\alpha}(x,dy)$, $\alpha \in I$, is uniformly equi-continuous for any $g \in C_b^1(\mbb{R}^d)$ which satisfies $|g(y)| \leq  \min\{1,|y|^2\}$, $y \in \mbb{R}^d$. 
\end{enumerate}
Let us mention that \eqref{C4} is automatically satisfied if $\mathfrak{P}_x$ equals the right-hand side of \eqref{main-eq7} and $(b_{\alpha}(x),Q_{\alpha}(x),\nu_{\alpha}(x))$ satisfies a certain measurability condition, cf.\ \cite[Remark 4.33]{julian}. The following statement is our main result. 

\begin{thm} \label{main-3}
	Let $h$ be a truncation function, and let $(b_{\alpha}(x),Q_{\alpha}(x),\nu_{\alpha}(x,\cdot))$, $\alpha \in I$, $x \in \mbb{R}^d$, be a family of L\'evy triplets which is uniformly bounded on compact sets (in the sense of \eqref{main-eq4}) and which satisfies at least one of the following two conditions. 
	\begin{enumerate}[label*=\upshape (A\arabic*),ref=\upshape A\arabic*] 
		\item\label{A1} Conditions \eqref{C2},\eqref{C3} hold for any compact set $K \subseteq \mbb{R}^d$, and the family $q_{\alpha}(x,\cdot): \mbb{R}^d \to \mbb{C}$, $\alpha \in I$, $x \in \mbb{R}^d$, of continuous negative definite functions associated with $(b_{\alpha}(x),Q_{\alpha}(x),\nu_{\alpha}(x,\cdot))$ via \eqref{sym} satisfies the uniform continuity condition \begin{equation*} 
			\lim_{r \to \infty} \sup_{|z-x| \leq r} \sup_{|\xi| \leq r^{-1}} \sup_{\alpha \in I} |q_{\alpha}(z,\xi)| =0 \fa x \in \mbb{R}^d.
		\end{equation*} 
		\item\label{A2} Conditions \eqref{C2},\eqref{C3} hold for $K=\mbb{R}^d$ and the family of L\'evy triplets is uniformly bounded: \begin{equation*}
			M := \sup_{\alpha \in I} \sup_{x \in \mbb{R}^d} \left( |b_{\alpha}(x)|+|Q_{\alpha}(x)| + \int_{y \neq 0} \min\{1,|y|^2\} \, \nu_{\alpha}(x,dy)\right)<\infty.
		\end{equation*}
	\end{enumerate} 
	Let $T_t f(x) := \mc{E}^x f(X_t) := \sup_{\mbb{P} \in \mathfrak{P}_x} \mbb{E}_{\mbb{P}} f(X_t)$ for uncertainty subsets $\mc{P}_x$, $x \in \mbb{R}^d$, satisfying \eqref{main-eq7} be such that \eqref{C4} and \eqref{C5} hold. If $f \in \mc{H}$ is such that $(t,x) \mapsto u(t,x):= T_t f(x)$ is continuous, then $u$ is a viscosity solution (in the sense of Definition~\ref{pre-21}) to \begin{align} \label{main-eq11} \begin{aligned}
	\partial_t u(t,x) - \sup_{\alpha \in I} &\bigg( b_{\alpha}(x) \cdot \nabla_x u(t,x) + \frac{1}{2} \tr\left(Q_{\alpha}(x) \cdot \nabla^2_x u(t,x)\right) \\
	& +\int_{y \neq 0} \big(u(t,x+y)-u(t,x)-\nabla_x u(t,x) \cdot h(y) \big) \, \nu_{\alpha}(x,dy) \bigg) = 0 \end{aligned}
	\end{align}
	with $u(0,x) = f(x)$.
\end{thm}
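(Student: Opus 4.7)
The strategy is to reduce to Theorem~\ref{pre-33}. We verify two things: (i) $C_b^\infty(\mbb{R}^d) \subseteq \mc{D}(A)$, where $A$ denotes the sublinear generator of $(T_t)_{t \geq 0}$, and (ii) on $C_b^\infty(\mbb{R}^d)$ we have
\[
	Af(x) = \sup_{\alpha \in I} A^\alpha f(x),
\]
where $A^\alpha$ is the pseudo-differential operator with symbol $q_\alpha(x,\cdot)$ from \eqref{sym}. Given (i) and (ii), continuity of $(t,x) \mapsto T_t f(x)$ and Theorem~\ref{pre-33} immediately yield that $u = T_t f$ is a viscosity solution of \eqref{main-eq11}.

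The core identity comes from the semimartingale representation: for any $\mbb{P} \in \mathfrak{P}_x$ and $f \in C_b^\infty(\mbb{R}^d)$, the definition of differential characteristics gives
\[
	\mbb{E}_\mbb{P}[f(X_t)] - f(x) = \mbb{E}_\mbb{P}\!\left[\int_0^t L_s^\mbb{P} f(X_{s-})\, ds\right],
\]
where $L_s^\mbb{P} g(y) := b_s^\mbb{P} \cdot \nabla g(y) + \tfrac12 \tr(Q_s^\mbb{P}\nabla^2 g(y)) + \int (g(y+z)-g(y)-\nabla g(y)\cdot h(z))\,\nu_s^\mbb{P}(dz)$. The inclusion \eqref{main-eq7} forces $L_s^\mbb{P} f(X_{s-}) \leq \sup_\alpha A^\alpha f(X_{s-})$ almost surely, whence
\[
	T_tf(x) - f(x) \leq \sup_{\mbb{P} \in \mathfrak{P}_x} \mbb{E}_\mbb{P}\!\left[\int_0^t \sup_\alpha A^\alpha f(X_{s-})\, ds\right],
\]
while \eqref{C5} produces the matching lower bound $T_tf(x) - f(x) \geq \mbb{E}_{\mbb{P}^\alpha}\bigl[\int_0^t A^\alpha f(X_{s-})\,ds\bigr]$ for each fixed $\alpha$. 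The content of (ii) is that after division by $t$ and passage to $t \downarrow 0$, both bounds converge to $\sup_\alpha A^\alpha f(x)$.

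The main obstacle is precisely this passage to the limit, as it demands $A^\alpha f(X_{s-}) \approx A^\alpha f(x)$ with error uniform in $\alpha \in I$ and $\mbb{P} \in \mathfrak{P}_x$ for small $s$ with high probability. A short Taylor-expansion argument shows that $x \mapsto A^\alpha f(x)$ inherits the equi-continuity from \eqref{C2} and from \eqref{C3} (the latter applied to $z \mapsto f(x+z) - f(x) - \nabla f(x)\cdot h(z)$, which is of order $|z|^2 \wedge 1$ locally uniformly in $x$). Combined with the maximal inequality of Section~\ref{max}, which provides a uniform estimate of $\sup_{\mbb{P} \in \mathfrak{P}_x}\mbb{P}(\sup_{s \leq t}|X_s - x| > r)$ in terms of the local bound \eqref{main-eq4}, this delivers the desired limit. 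Here the dichotomy between \eqref{A1} and \eqref{A2} intervenes: under \eqref{A2} all bounds are global and the argument is direct, whereas under \eqref{A1} one first uses the symbol-growth hypothesis $\lim_{r\to\infty}\sup_{|z-x|\leq r}\sup_{|\xi|\leq r^{-1}}\sup_\alpha|q_\alpha(z,\xi)|=0$ to localize $X_s$ to a large compact set (on which \eqref{C2}, \eqref{C3} then apply) via the same maximal inequality. Boundedness of $\sup_\alpha A^\alpha f$ (automatic under \eqref{A2}; extracted from \eqref{main-eq4} and the symbol-growth condition under \eqref{A1}) together with its continuity ensures that the limit lies in $\mc{H}$, so $C_b^\infty(\mbb{R}^d) \subseteq \mc{D}(A)$, and Theorem~\ref{pre-33} concludes the proof.
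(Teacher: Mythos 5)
Your proposal follows essentially the same route as the paper: Theorem~\ref{pre-33} reduces everything to identifying the sublinear generator on $C_b^\infty(\mbb{R}^d)$, and your identification proceeds exactly as in the paper's Lemma~\ref{p-1} — Dynkin's formula, the inclusion \eqref{main-eq7} together with \eqref{C2}/\eqref{C3} for the upper bound, \eqref{C5} for the lower bound, and the maximal inequality of Section~\ref{max} both for the $O(t)$ probability estimates and for the localization under \eqref{A1}. The only caveat is that under \eqref{A1} the Dynkin identity must be applied to the process stopped at the exit time of a ball (the unstopped identity you display is justified only under the global bound \eqref{A2}), which is precisely how the paper organizes its argument, and which your localization remark already anticipates.
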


Theorem~\ref{main-3} generalizes \cite[Theorem 4.37]{julian} where the assertion was shown under stronger regularity assumptions -- \cite{julian} requires Lipschitz continuity of the uncertainty coefficients and of the mapping $f$ -- and under the additional assumption that \begin{equation}
	\sup_{\alpha \in I} \sup_{x \in \mbb{R}^d} \left( |b_{\alpha}(x)| + |Q_{\alpha}(x)| + \int_{y \neq 0} \min\{|y|,|y|^2\} \, \nu_{\alpha}(x,dy) \right)<\infty \label{main-eq12}.
\end{equation}
Clearly, \eqref{main-eq12} is more restrictive than the uniform boundedness condition \eqref{A2} since \eqref{main-eq12} poses an additional integrability assumption on the family of L\'evy measures at infinity; for instance, $\nu_{\alpha}(dy) = |y|^{-d-1} \, dy$ satisfies \eqref{A2} but not \eqref{main-eq12}. Since the uniform boundedness condition \eqref{A2} in Theorem~\ref{main-3} can be replaced by the uniform continuity condition \eqref{A1}, Theorem~\ref{main-3} even allows us to study equations with unbounded coefficients; this seems to be a novelty in the literature. We will take advantage of this when we study HJB equations associated with solutions to stochastic differential equations, see Corollary~\ref{main-11}. 

\begin{bem_thm} \label{main-5} \begin{enumerate}
	\item\label{main-5-i} Hollender \cite{julian} established a comparison principle for HJB equations  \eqref{main-eq11}, cf.\ \cite[Corollary 2.34]{julian}; it gives, in particular, a sufficient condition for the uniqueness of the solution to \eqref{main-eq11}.
	\item\label{main-5-ii} Theorem~\ref{main-3} requires continuity of the mapping $(t,x) \mapsto T_t f(x)$. Using the stochastic representation $T_t(x) = \sup_{\mbb{P} \in \mathfrak{P}_x} E_{{\mbb{P}}} f(X_t)$ and a maximal inequality, which we will derive in Section~\ref{max}, we will show that $T_t f(x)$ depends continuously on $t$ whenever $f \in \mc{H}$ is bounded and uniformly continuous, cf.\ Theorem~\ref{max-5}. The continuous dependence on the space variable $x$ is, however, in general hard to verify, see the discussion in \cite[Remark 4.43]{julian} for further details.
\end{enumerate} \end{bem_thm}

\section{Applications} \label{ex}

In this section we present applications of Theorem~\ref{main-3}. We are going to study existence and uniqueness results for Hamilton--Jacobi--Bellman (HJB) equations
\begin{align} \label{ex-eq3} \begin{aligned}
\partial_t u(t,x) - \sup_{\alpha \in I} &\bigg( b_{\alpha}(x) \cdot \nabla_x u(t,x) + \frac{1}{2} \tr(Q_{\alpha}(x) \cdot \nabla^2_x u(t,x)) \\
& +\int_{y \neq 0} \big(u(t,x+y)-u(t,x)-\nabla_x u(t,x) \cdot h(y) \big) \, \nu_{\alpha}(x,dy) \bigg) = 0 \end{aligned}
\end{align}
for the following particular cases: \begin{enumerate}
	\item The coefficients $(b_{\alpha}(x),Q_{\alpha}(x),\nu_{\alpha}(x,dy))$ are space homogeneous, i.\,e.\ do not depend on the space variable $x$. This leads to, so-called, L\'evy processes for sublinear expectations, cf.\ Proposition~\ref{main-6} and Corollary~\ref{main-7}. Our results apply, in particular, to classical L\'evy processes, cf.\ Corollary~\ref{main-8}.
	\item The coefficients $(b_{\alpha}(x),Q_{\alpha}(x),\nu_{\alpha}(x,dy))$ are of the form 
	\begin{equation*}
		b_{\alpha}(x) = \sigma(x) \hat{b}_{\alpha} \qquad Q_{\alpha}(x) = \sigma(x) \hat{Q}_{\alpha} \sigma(x)^T \qquad \nu_{\alpha}(x,dy) = \hat{\nu}_{\alpha} \circ \sigma(x)^{-1}
	\end{equation*}
	for a family of L\'evy triplets $(\hat{b}_{\alpha},\hat{Q}_{\alpha},\hat{\nu}_{\alpha})$, $\alpha \in I$, and a mapping $\sigma$. Such HJB equations are the evolution equations of solutions to stochastic differential equations driven by a sublinear L\'evy process, cf.\ Corollary~\ref{main-11}.
	\item The index set $I$ consists of a single element, i.\,e.\ \eqref{ex-eq3} becomes \begin{align*}  \begin{aligned}
	\partial_t u(t,x) - &\bigg( b(x) \cdot \nabla_x u(t,x) + \frac{1}{2} \tr(Q(x) \cdot \nabla^2_x u(t,x)) \\
	& +\int_{y \neq 0} \big(u(t,x+y)-u(t,x)-\nabla_x u(t,x) \cdot h(y) \big) \, \nu(x,dy) \bigg) = 0. \end{aligned}
	\end{align*}
	They appear as Kolmogorov backward equations of Feller processes on classical probability spaces, cf.\ Corollary~\ref{main-13}.
\end{enumerate}

As usual, $(X_t)_{t \geq 0}$ denotes the canonical process on the Skorohod space. Recall that a function $f: \mbb{R}^d \to \mbb{R}$ is called upper semi-analytic if the preimage $\{f>c\}=\{x \in \mbb{R}^d; f(x)>c\}$ is an analytic set for all $c \in \mbb{R}$, i.\,e.\  $\{f>c\}$ is the continuous image of a Polish space for all $c \in \mbb{R}$.

\begin{prop} \label{main-6}
	Let $(b_{\alpha},Q_{\alpha},\nu_{\alpha})$, $\alpha \in I$, be a family of L\'evy triplets. If \begin{equation*}
	\sup_{\alpha \in I} \left( |b_{\alpha}| + |Q_{\alpha}| + \int_{y \neq 0} \min\{1,|y|^2\} \, \nu_{\alpha}(dy) \right)< \infty, 
	\end{equation*}
	then the family of sublinear operators \begin{equation*}
	T_t f(x) := \mc{E}^x f(X_t) := \sup_{\mbb{P} \in \mathfrak{P}_x} \mbb{E}_{\mbb{P}} f(X_t), \qquad t \geq 0, \, x \in \mbb{R}^d,
	\end{equation*}
	with uncertainty subsets
	\begin{equation}
	\mathfrak{P}_x :=\left\{ \mbb{P} \in \mathfrak{P}_{\sem}^{\ac}(D_x); (b_s^{\mbb{P}},Q_s^{\mbb{P}},\nu_s^{\mbb{P}})(\omega) \in \bigcup_{\alpha \in I} \{(b_{\alpha},Q_{\alpha},\nu_{\alpha})\} \, \, \lambda(ds)\times \mbb{P}-\text{a.s.}\right\} \label{main-eq13}
	\end{equation}
	defines a sublinear Markov semigroup on each of the following spaces: \begin{enumerate}
		\item\label{main-6-i} the space of bounded upper semi-analytic functions,
		\item\label{main-6-ii} the space of bounded uniformly continuous functions,
		\item\label{main-6-iii} the space of bounded Lipschitz continuous functions.
	\end{enumerate}
	If the family of L\'evy measures $\nu_{\alpha}$, $\alpha \in I$, is tight at infinity in the sense that \begin{equation}
		\lim_{R \to \infty} \sup_{\alpha \in I} \int_{|y|>R} \, \nu_{\alpha}(dy)=0, \label{main-eq10}
	\end{equation}
	then $(T_t)_{t \geq 0}$ is a sublinear Markov semigroup on \begin{enumerate} \setcounter{enumi}{3}
		\item\label{main-6-iv} the space of bounded continuous functions.
	\end{enumerate}
\end{prop}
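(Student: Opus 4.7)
My plan is to exploit the spatial homogeneity of the coefficients to reduce the problem to $x=0$, check the algebraic semigroup properties via a dynamic programming principle, and then verify, for each function space, that $T_t$ maps it into itself. Translation invariance of the characteristics $(b_\alpha, Q_\alpha, \nu_\alpha)$ and of the set on the right-hand side of \eqref{main-eq13} imply that $\mathbb{P} \in \mathfrak{P}_x$ if and only if the pushforward of $\mathbb{P}$ under $\omega \mapsto \omega - x$ lies in $\mathfrak{P}_0$, so that
\[ T_t f(x) = \sup_{\mathbb{P} \in \mathfrak{P}_0} \mathbb{E}_{\mathbb{P}} f(x+X_t). \]
Sublinearity, monotonicity, and $T_t c = c$ follow directly from the supremum-of-integrals structure, and $T_0 = \mathrm{id}$ is immediate since $\omega(0)=x$ for every $\omega \in D_x$.

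The core analytic step is the semigroup property $T_{t+s} = T_t T_s$, which I would establish by two complementary inequalities. For $T_{t+s} \leq T_t T_s$, fix $\mathbb{P} \in \mathfrak{P}_x$ and disintegrate via regular conditional probabilities $\mathbb{P}(\cdot \mid \mathcal{F}_t)$. Because the semimartingale characteristics are local in time and translation invariant, the conditional laws, after shifting so that they start from $X_t(\omega)$, lie in $\mathfrak{P}_{X_t(\omega)}$ for $\mathbb{P}$-a.e.\ $\omega$. This gives $\mathbb{E}_{\mathbb{P}}[f(X_{t+s}) \mid \mathcal{F}_t] \leq T_s f(X_t)$ a.s., and integrating yields the inequality. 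For the reverse $T_{t+s} \geq T_t T_s$, I would produce, for each $\epsilon>0$, a universally measurable kernel $x \mapsto \mathbb{P}_x \in \mathfrak{P}_x$ with $\mathbb{E}_{\mathbb{P}_x} f(X_s) \geq T_s f(x) - \epsilon$ via the Jankov--von Neumann selection theorem (using that $(x,\mathbb{P}) \mapsto \mathbb{E}_\mathbb{P} f(X_s)$ is upper semi-analytic on the analytic graph of $x \mapsto \mathfrak{P}_x$), and then paste any $\mathbb{P} \in \mathfrak{P}_x$ on $[0,t]$ with $\mathbb{P}_{X_t(\omega)}$ on $[t,t+s]$; the resulting concatenation is again in $\mathfrak{P}_x$ because the characteristics are local and take values in the same family.

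For the function spaces, the shift representation gives quick estimates. In case \eqref{main-6-ii}, $|T_t f(x) - T_t f(y)| \leq \sup_{\mathbb{P} \in \mathfrak{P}_0} \mathbb{E}_\mathbb{P} |f(x+X_t) - f(y+X_t)| \leq \omega_f(|x-y|)$ where $\omega_f$ is the modulus of uniform continuity of $f$, so $T_t f$ inherits uniform continuity and stays bounded. Case \eqref{main-6-iii} is identical with $\omega_f(r) = \|f\|_{\mathrm{Lip}} r$. For \eqref{main-6-i}, boundedness is obvious, and upper semi-analyticity of $T_t f$ follows from the standard measurable projection argument applied to the analytic graph of $\mathfrak{P}_{(\cdot)}$ together with preservation of the USA property under sup-projections. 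For \eqref{main-6-iv}, ordinary continuity requires the tightness of $\{X_t\}$ under the whole family $\mathfrak{P}_0$: given $\epsilon>0$ and $x_n \to x$, the maximal inequality from Section~\ref{max} (which, crucially, uses precisely the assumption \eqref{main-eq10}) yields $R$ with $\sup_{\mathbb{P} \in \mathfrak{P}_0} \mathbb{P}(|X_t|>R) < \epsilon$; on $\{|X_t|\leq R\}$ the continuous function $f$ is uniformly continuous on a compact set, and continuity of $T_t f$ at $x$ follows by splitting the difference $\mathbb{E}_\mathbb{P}|f(x_n+X_t) - f(x+X_t)|$ on this event and its complement.

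The main obstacle is the second inequality of the semigroup property in the upper semi-analytic case \eqref{main-6-i}, where the measurable selection and pasting must be performed with care so as to remain inside $\mathfrak{P}_x$; in cases \eqref{main-6-ii}-\eqref{main-6-iv} a simpler pointwise selection suffices because $T_s f$ is continuous. The other delicate point is case \eqref{main-6-iv}, where one must really invoke the tightness condition \eqref{main-eq10} via the maximal inequality of Section~\ref{max}, since without it the jump part can push mass to infinity and destroy continuity of $T_t f$.
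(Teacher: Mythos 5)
Your treatment of the invariance statements (ii)--(iv) is essentially the paper's argument: spatial homogeneity gives $T_tf(x)=\mc{E}^0f(x+X_t)$, subadditivity gives $|T_tf(x)-T_tf(y)|\le \mc{E}^0|f(x+X_t)-f(y+X_t)|$, which settles the uniformly continuous and Lipschitz cases, and for $C_b$ one splits on $\{\sup_{s\le t}|X_s|>R\}$ and controls that event by the maximal inequality of Proposition~\ref{max-1}. Where you genuinely diverge is part (i) and the semigroup property: the paper simply cites Hollender \cite{julian} for the fact that $(T_t)_{t\ge0}$ is a sublinear Markov semigroup on bounded upper semi-analytic functions and is spatially homogeneous, and then only has to prove invariance of the smaller spaces; you instead sketch the dynamic programming argument from scratch (conditioning for $T_{t+s}\le T_tT_s$, Jankov--von Neumann selection of $\eps$-optimizers plus pasting for the converse). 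That route is the correct one -- it is how \cite{nutz} and \cite{julian} prove it -- and it buys a more self-contained picture, but be aware that the steps you pass over quickly are exactly the technical core that the citation absorbs: stability of the class $\mathfrak{P}^{\ac}_{\sem}$ with constrained characteristics under conditioning and concatenation, and the analyticity/measurability of the graph of $x\mapsto\mathfrak{P}_x$ needed both for the selection and for the upper semi-analyticity of $T_tf$ (this is the content of \cite{nutz2}). As a blind sketch this is acceptable provided those facts are invoked explicitly; as written they are asserted rather than proved.

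One small correction in case (iv): the tightness condition \eqref{main-eq10} is not used in the maximal inequality itself -- Proposition~\ref{max-1} holds without it and yields the bound $ct\sup_{\alpha}\sup_{|\xi|\le R^{-1}}|\psi_\alpha(\xi)|$. Condition \eqref{main-eq10}, together with the uniform boundedness of the triplets, enters only afterwards, via Lemma~\ref{app-3}, to guarantee that $\sup_{\alpha\in I}\sup_{|\xi|\le R^{-1}}|\psi_\alpha(\xi)|\to0$ as $R\to\infty$, so that the exceptional event can be made small uniformly over $\mathfrak{P}_0$. With that attribution fixed, your continuity argument for $T_tf$, $f\in C_b(\mbb{R}^d)$, coincides with the paper's.
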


It can be shown that the semigroup $(T_t)_{t \geq 0}$ is spatially homogeneous on the space of bounded upper semi-analytic functions, i.\,e.\ \begin{equation}
T_t f(x) = \mc{E}^x f(X_t) = \mc{E}^0 f(x+X_t) = T_t (f(x+\cdot))(0),  \label{main-eq19}
\end{equation}
for any bounded upper semi-analytic function $f$, and that $(X_t)_{t \geq 0}$ has independent and stationary increments, see e.\,g.\ \cite[Remark 4.38]{julian} for details. Following \cite{denk,julian} we refer to the process $(X_t)_{t \geq 0}$ from Corollary~\ref{main-7} as \emph{L\'evy process for sublinear expectations with uncertainty coefficients $(b_{\alpha},Q_{\alpha},\nu_{\alpha})$}; we would like to mention that there is no standard terminology for this class of processes, for instance \cite{hu} calls them G-L\'evy processes.  \par \medskip

From Theorem~\ref{main-3} and Proposition~\ref{main-6} we obtain the following existence and uniqueness result for HJB equations \eqref{ex-eq3} with space homogeneous coefficients.

\begin{kor} \label{main-7}
	Let $h$ be a truncation function, and let $(b_{\alpha},Q_{\alpha},\nu_{\alpha})$, $\alpha \in I$, be a family of L\'evy triplets such that \begin{equation}
		\sup_{\alpha \in I} \left( |b_{\alpha}| + |Q_{\alpha}| + \int_{y \neq 0} \min\{1,|y|^2\} \, \nu_{\alpha}(dy) \right)< \infty. \label{main-eq14}
	\end{equation}
	Denote by $T_t f(x) = \mc{E}^x f(X_t)$ the family of sublinear operators introduced in Proposition~\ref{main-6}. The mapping $u(t,x) := T_t f(x)$ is a viscosity solution to \begin{align} \label{main-eq15}\begin{aligned}
		\partial_t u(t,x) - \sup_{\alpha \in I}& \bigg( b_{\alpha} \cdot \nabla_x u(t,x) + \frac{1}{2} \tr\left(Q_{\alpha} \cdot \nabla^2_x u(t,x)\right) \\
		&\qquad + \int_{y \neq 0} \big( u(t,x+y)-u(t,x)-\nabla_x u(t,x) \cdot h(y) \big) \, \nu_{\alpha}(dy) \bigg)=0 \end{aligned}
	\end{align}
	with $u(0,x)=f(x)$ in each of the following cases: \begin{enumerate}
		\item\label{main-7-i} $f$ is bounded and uniformly continuous,
		\item\label{main-7-ii} \eqref{main-eq10} holds and $f$ is bounded and continuous.
	\end{enumerate}
	If additionally the tightness condition \begin{equation}
		\lim_{r \to 0} \sup_{\alpha \in I} \int_{0<|y| \leq r} |y|^2 \, \nu_{\alpha}(dy) = 0 \quad \text{and} \quad \lim_{R \to \infty} \sup_{\alpha \in I} \int_{|y|>R} \, \nu_{\alpha}(dy)=0 \label{main-eq17}
	\end{equation}
	holds, then $u(t,x) = T_t f(x)$ is for any $f \in C_b(\mbb{R}^d)$ the unique viscosity solution to \eqref{main-eq15} with $u(0,x)=f(x)$.
\end{kor}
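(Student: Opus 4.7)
The corollary is essentially a specialization of Theorem~\ref{main-3} to the spatially homogeneous setting, combined with Proposition~\ref{main-6} for the semigroup structure and with the comparison principle of Remark~\ref{main-5}\eqref{main-5-i} for uniqueness. My plan is therefore (a) to obtain the sublinear Markov semigroup from Proposition~\ref{main-6}, (b) to check the remaining hypotheses of Theorem~\ref{main-3}, (c) to verify the continuity of $(t,x) \mapsto T_tf(x)$, and (d) to invoke the comparison principle for uniqueness. In step (a), Proposition~\ref{main-6}\eqref{main-6-ii} gives a sublinear Markov semigroup on bounded uniformly continuous functions, which handles case \eqref{main-7-i}; under the additional tightness \eqref{main-eq10}, Proposition~\ref{main-6}\eqref{main-6-iv} extends this to $C_b(\mbb{R}^d)$, handling case \eqref{main-7-ii}. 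In either situation \eqref{C4} holds. For step (b) I use option \eqref{A2} of Theorem~\ref{main-3}: the uniform bound $M<\infty$ is exactly \eqref{main-eq14}, and the equicontinuity assumptions \eqref{C2}, \eqref{C3} are vacuous because the triplets do not depend on $x$. For \eqref{C5} I take $\mbb{P}^{\alpha}$ to be the classical law on $D_x$ of the L\'evy process with triplet $(b_{\alpha},Q_{\alpha},\nu_{\alpha})$ started at $x$; its differential characteristics are then the constant triplet $(b_{\alpha},Q_{\alpha},\nu_{\alpha})$, as required.

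The crucial remaining point is the joint continuity of $u(t,x) = T_tf(x)$. I will treat space and time separately and combine them by a triangle inequality. For spatial continuity I exploit the spatial homogeneity \eqref{main-eq19}: for any $x,y \in \mbb{R}^d$,
\begin{equation*}
|T_tf(x) - T_tf(y)| \leq T_t\bigl(|f(\,\cdot\, + x) - f(\,\cdot\, + y)|\bigr)(0) \leq \sup_{z \in \mbb{R}^d}|f(z+x) - f(z+y)|,
\end{equation*}
and the right-hand side tends to $0$ as $|x-y|\to 0$ by uniform continuity of $f$ in case \eqref{main-7-i}. In case \eqref{main-7-ii} this estimate is only useful on a compact set; to remedy this I will approximate $f$ by a uniformly continuous truncation $f_R$ agreeing with $f$ on $B(0,R)$ and estimate the error using the maximal inequality of Section~\ref{max} together with the tightness at infinity \eqref{main-eq10} to bound
\begin{equation*}
\sup_{|x| \leq r}\sup_{\mbb{P} \in \mathfrak{P}_x} \mbb{P}\Bigl(\sup_{s \leq t}|X_s| > R\Bigr)
\end{equation*}
uniformly for $R$ large, so that the truncation error is small uniformly in $x$ on compact sets. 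Temporal continuity is precisely Theorem~\ref{max-5} (Remark~\ref{main-5}\eqref{main-5-ii}) in case \eqref{main-7-i}; in case \eqref{main-7-ii} the same maximal inequality combined with \eqref{main-eq10} and uniform continuity of $f$ on large balls gives the analogous statement. Having established continuity, Theorem~\ref{main-3} yields that $u = T_tf$ is a viscosity solution of \eqref{main-eq15} with $u(0,\cdot) = f$.

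For uniqueness under the additional tightness hypotheses \eqref{main-eq17}, I will apply Hollender's comparison principle \cite[Corollary 2.34]{julian} (Remark~\ref{main-5}\eqref{main-5-i}): the two conditions in \eqref{main-eq17} are precisely the tightness of the L\'evy kernels near $0$ and at infinity, uniformly in $\alpha \in I$, that are needed to control the non-local term uniformly. Since $u = T_tf$ is a bounded viscosity solution with initial datum $f \in C_b(\mbb{R}^d)$, the comparison principle forces any other such solution to coincide with $u$. I expect the main obstacle to be a clean argument for spatial continuity in case \eqref{main-7-ii}: when $f$ is only bounded and continuous the translation bound above does not give a global modulus, and one must genuinely use the tightness \eqref{main-eq10} together with the maximal inequality of Section~\ref{max} to localize the computation to a compact set with a small, uniform tail error.
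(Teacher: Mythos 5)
Your proposal is correct and follows essentially the same route as the paper: apply Theorem~\ref{main-3} under \eqref{A2} with \eqref{C2},\eqref{C3} vacuous, \eqref{C4} from Proposition~\ref{main-6}, \eqref{C5} via the classical L\'evy law, joint continuity from Proposition~\ref{main-6} together with Theorem~\ref{max-5}, and uniqueness from Hollender's comparison principle. Your extra truncation argument for spatial continuity in case \eqref{main-7-ii} merely re-derives what Proposition~\ref{main-6}\eqref{main-6-iv} already supplies, so it is sound but not needed.
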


Corollary~\ref{main-7} generalizes \cite{nutz} where it was shown that $u(t,x) = \sup_{\mbb{P} \in \mathfrak{P}_x} \mbb{E}_{\mbb{P}} f(X_t)$ is a viscosity solution to \eqref{main-eq15} under the additional assumptions that $f$ is Lipschitz continuous and that $\sup_{\alpha \in I} \int_{|y|>1} |y| \, \nu_{\alpha}(dy)<\infty$. Recently, Denk et al.\ \cite{denk} studied semigroups associated with L\'evy processes for sublinear expectations and showed that \eqref{main-eq14} implies the existence of a viscosity solution to \eqref{main-eq15} for initial values $u(0,x)=f(x)$ which are bounded and uniformly continuous; this corresponds to Corollary~\ref{main-7}\eqref{main-7-i}. Let us mention that Denk et al.\ do not have the representation $u(t,x) = \sup_{\mbb{P} \in \mathfrak{P}_x} \mbb{E}_{\mbb{P}} f(X_t)$ for the solution; this stochastic representation is useful to derive further information on the solution, e.\,g.\ to study regularity and growth properties. Moreover, it allows us to interpret L\'evy processes for sublinear expectations as generalizations of classical L\'evy processes under uncertainty of the semimartingale characteristics. \par \medskip
For the particular case that the index set $I$ consists of a finitely many elements, the tightness condition \eqref{main-eq10} is automatically satisfied, and therefore Corollary~\ref{main-7} gives the following result for classical L\'evy processes.

\begin{kor} \label{main-8}
	Let $(X_t)_{t \geq 0}$ be a $d$-dimensional L\'evy process on a classical probability space $(\Omega,\mc{A},\mbb{P})$, and denote by \begin{equation*}
		\psi(\xi) = -ib \cdot \xi + \frac{1}{2} \xi \cdot Q \xi + \int_{y \neq 0} \left( 1-e^{iy \cdot \xi} + i \xi \cdot h(y) \right) \, \nu(dy), \quad \xi \in \mbb{R}^d,
	\end{equation*}
	its characteristic exponent. If $f : \mbb{R}^d \to \mbb{R}$ is continuous and bounded, then $T_t f(x) := \mbb{E}f(x+X_t)$ is the unique viscosity solution to \begin{align*} \begin{aligned}
			\partial_t u(t,x) -& \bigg( b \cdot \nabla_x u(t,x) + \frac{1}{2} \tr\left(Q \cdot \nabla^2_x u(t,x)\right) \\
			&\qquad + \int_{y \neq 0} \big( u(t,x+y)-u(t,x)-\nabla_x u(t,x) \cdot h(y) \big) \, \nu(dy) \bigg)=0. \end{aligned}
		\end{align*}
	with $u(0,x) = f(x)$.
\end{kor}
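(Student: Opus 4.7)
My plan is to derive Corollary~\ref{main-8} as a special case of Corollary~\ref{main-7} with a singleton index set $I = \{\alpha_0\}$ and the L\'evy triplet $(b_{\alpha_0},Q_{\alpha_0},\nu_{\alpha_0}) := (b,Q,\nu)$ read off from the characteristic exponent $\psi$. With a one-element index set the uniform boundedness condition \eqref{main-eq14} is immediate, and the tightness condition \eqref{main-eq10} as well as both limits in \eqref{main-eq17},
\[ \lim_{r \to 0} \int_{0 < |y| \leq r} |y|^2 \, \nu(dy) = 0 \quad \text{and} \quad \lim_{R \to \infty} \int_{|y| > R} \, \nu(dy) = 0, \]
follow from dominated convergence applied with the majorant $\min\{1,|y|^2\}$, using $\int_{y \neq 0} \min\{1, |y|^2\} \, \nu(dy) < \infty$, which holds for any L\'evy measure.

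Next I need to identify the sublinear semigroup from Corollary~\ref{main-7} with the classical semigroup $\mbb{E} f(x + X_t)$. Denoting by $\tilde X$ the canonical process on the Skorohod space, the uncertainty set \eqref{main-eq13} reduces to the family of probability measures under which $\tilde X$ starts at $x$ and is a semimartingale with constant deterministic differential characteristics $(b,Q,\nu)$. By the uniqueness part of Jacod's characterization of L\'evy processes via their semimartingale characteristics (see \cite{jacod}), equivalently by the L\'evy--Khintchine formula, such a measure is uniquely determined and coincides with the law of $x + X$. Hence $\mathfrak{P}_x$ is a singleton, the supremum in $\mc{E}^x f(\tilde X_t)$ reduces to an expectation, and
\[ \mc{E}^x f(\tilde X_t) = \mbb{E} f(x + X_t) = T_t f(x) \]
for every bounded Borel measurable $f$.

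With this identification the conclusion is immediate from Corollary~\ref{main-7}\eqref{main-7-ii}: since $f \in C_b(\mbb{R}^d)$ and both \eqref{main-eq10} and \eqref{main-eq17} hold, the mapping $u(t,x) = T_t f(x)$ is the unique viscosity solution to the HJB equation stated in Corollary~\ref{main-8} with $u(0,x) = f(x)$. The only slightly non-trivial step is the identification of $\mathfrak{P}_x$ as a singleton, which rests on the well-known uniqueness of the L\'evy martingale problem; the verification of the boundedness and tightness properties of $\nu$, by contrast, is entirely routine.
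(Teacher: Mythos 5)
Your proposal is correct and follows essentially the same route as the paper, which obtains Corollary~\ref{main-8} directly as the singleton-index special case of Corollary~\ref{main-7}, noting that the tightness conditions \eqref{main-eq10} and \eqref{main-eq17} hold automatically for a single L\'evy measure. Your explicit identification of $\mathfrak{P}_x$ as the singleton consisting of the law of $x+X$ (via uniqueness of the law of a semimartingale with deterministic characteristics) is a point the paper leaves implicit, but it does not change the argument.
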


Next we investigate HJB equations \eqref{ex-eq3} with coefficients $(b_{\alpha}(x),Q_{\alpha}(x),\nu_{\alpha}(x,dy))$ of the form \begin{equation*}
	b_{\alpha}(x) = \sigma(x) \hat{b}_{\alpha} \qquad Q_{\alpha}(x) = \sigma(x) \hat{Q}_{\alpha} \sigma(x)^T \qquad \nu_{\alpha}(x,dy) = \hat{\nu}_{\alpha} \circ \sigma(x)^{-1}
\end{equation*}
for a family of L\'evy triplets $(\hat{b}_{\alpha},\hat{Q}_{\alpha},\hat{\nu}_{\alpha})$, $\alpha \in I$, and a mapping $\sigma$; here $\hat{\nu}_{\alpha} \circ \sigma(x)^{-1}$ denotes the pullback of the measure $\hat{\nu}_{\alpha}$ under $\sigma(x)$. It turns out that such HJB equations are the evolution equations of solutions to stochastic differential equations driven by a sublinear L\'evy process. Before we can state the result, we need to introduce stochastic integrals on sublinear expectation spaces. It is well-known, see e.\,g.\ Jacod \cite{jacod}, that for any semimartingale $(X_t)_{t \geq 0}$ on a classical probability space $(\Omega,\mc{A},\mbb{P})$ and any locally bounded predictable process $(H_t)_{t \geq 0}$ the stochastic integral $\int_0^t H_s \, dX_s$ exists. We will use the notation \begin{equation*}
	I_{\mbb{P}}(H,X)_t := \int_0^t H_s \, dX_s
\end{equation*}
to emphasize that the stochastic integral depends on the underlying probability measure $\mbb{P}$. The next definition is adapted from \cite[Definition 4.51]{julian}.

\begin{defn} \label{main-10} 
	Let $(X_t)_{t \geq 0}$ be a $d$-dimensional $(\mathcal{F}_t)_{t \geq 0}$-adapted stochastic process on a measurable space $(\Omega,\mc{A})$, and let $\mathfrak{P} \subseteq \mathfrak{P}_{\sem}^{\ac}(\Omega)$, i.\,e.\ a family of probability measures $\mbb{P}$ on $(\Omega,\mc{A})$ such that $(X_t,\mc{F}_t)_{t \geq 0}$ is a semimartingale (with respect to $\mbb{P}$) which has absolutely continuous differential characteristics with respect to Lebesgue measure. For a locally bounded predictable process $(H_t)_{t \geq 0}$ the \emph{stochastic integral} of $(H_t)_{t \geq 0}$ with respect to $(X_t)_{t \geq 0}$, \begin{equation*}
		I_{\mathfrak{P}}(H,X) = (I_{\mathfrak{P}}(H,X)_t)_{t \geq 0}: \Omega \to D([0,\infty))
	\end{equation*}
	is defined as the stochastic process adapted to the $\mathfrak{P}$-universally augmented filtration $(\mc{F}_{t+}^{\mathfrak{P}})_{t \geq 0}$, \begin{equation*}
		\mc{F}_{t+}^{\mathfrak{P}} := \bigcap_{\mbb{P} \in \mathfrak{P}} \mc{F}_{t+}^{\mbb{P}}
	\end{equation*}
	with \cadlag sample paths such that \begin{equation*}
		I_{\mathfrak{P}}(H,X) = I_{\mbb{P}}(H,X)
	\end{equation*}
	holds $\mbb{P}$-almost surely for any $\mbb{P} \in \mathfrak{P}$.
\end{defn}

Let us mention that the stochastic integral can be defined for a larger space of integrands; in this paper we restrict ourselves to locally bounded processes. The following existence and uniqueness result for stochastic differential equations on sublinear expectation spaces is a direct consequence of \cite[Theorem 4.57]{julian}.

\begin{thm} \label{main-105}
	Let $(X_t)_{t \geq 0}$ and $\mathfrak{P}$ be as in Definition~\ref{main-10}. If $\sigma: \mbb{R}^k \to \mbb{R}^{k \times d}$ is a Lipschitz continuous function, then the stochastic differential equation \begin{equation}
		Z =x+ I_{\mathfrak{P}}(\sigma(Z_{-\cdot}),X) \label{main-eq23}
	\end{equation}
	has for any initial point $Z_0 = x \in \mbb{R}^k$ a solution $(Z_t)_{t \geq 0}$ with \cadlag sample paths which is adapted to the universally completed filtration \begin{equation*}
		\mc{F}_t^{\mathfrak{P}} := \bigcap_{\mbb{P} \in \mathfrak{P}} \mc{F}_t^{\mbb{P}} := \bigcap_{\mbb{P} \in \mathfrak{P}} \{A \cup N; A \in \mc{F}_t, \exists B \in \mc{A}, B \supseteq N: \mbb{P}(B)=0\}.
	\end{equation*}
	The solution is unique in the following sense: If $(Z_t')_{t \geq 0}$ is another $\mc{F}_t^{\mathfrak{P}}$-adapted solution with \cadlag sample paths, then $Z=Z'$ $\mbb{P}$-a.s. for any $\mbb{P} \in \mathfrak{P}$.
\end{thm}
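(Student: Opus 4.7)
The plan is to solve the SDE simultaneously under every $\mbb{P} \in \mathfrak{P}$ by a Picard iteration, and then aggregate the $\mbb{P}$-dependent solutions into a single pathwise process. Setting $Z^{(0)}_t := x$, I would define inductively
\begin{equation*}
Z^{(n+1)}_t := x + I_{\mathfrak{P}}(\sigma(Z^{(n)}_{-\cdot}),X)_t, \qquad n \geq 0.
\end{equation*}
Because the integral $I_{\mathfrak{P}}$ from Definition~\ref{main-10} is a \emph{single} \cadlag process adapted to $(\mc{F}_{t+}^{\mathfrak{P}})_{t \geq 0}$, and not merely an equivalence class under each $\mbb{P}$ separately, the sequence $(Z^{(n)})_{n \geq 0}$ is intrinsically well-defined as one object; this is what will later make a $\mbb{P}$-independent aggregation possible.

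Next I would check convergence of the Picard scheme under each fixed $\mbb{P} \in \mathfrak{P}$. Since $\mbb{P} \in \mathfrak{P}_{\sem}^{\ac}$, one has the classical $L^2$ stochastic-integral estimate (a BDG-type bound combined with absolute continuity of the characteristics), after a localisation by stopping times $\tau_k \uparrow \infty$ on which $(b^{\mbb{P}}, Q^{\mbb{P}}, \nu^{\mbb{P}})$ is bounded. The Lipschitz property of $\sigma$ together with Gronwall's inequality then yields
\begin{equation*}
\mbb{E}_{\mbb{P}} \sup_{s \leq t \wedge \tau_k} \bigl| Z^{(n+1)}_s - Z^{(n)}_s \bigr|^2 \leq \frac{(C_{\mbb{P},k}\, t)^n}{n!},
\end{equation*}
so that $(Z^{(n)})$ converges $\mbb{P}$-almost surely, locally uniformly in time, to a \cadlag limit $Z^{\mbb{P}}$ which solves \eqref{main-eq23} under $\mbb{P}$.

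To aggregate, I would define the candidate solution pathwise by $Z_t(\omega) := \limsup_{n \to \infty} Z^{(n)}_t(\omega)$ componentwise, with the convention $Z_t(\omega) := x$ where this limsup is infinite. Then $Z$ is adapted to $(\mc{F}_{t+}^{\mathfrak{P}})_{t \geq 0}$ because each $Z^{(n)}$ is. For every $\mbb{P} \in \mathfrak{P}$ the exceptional set on which the Picard iterates fail to converge to $Z^{\mbb{P}}$ locally uniformly is $\mbb{P}$-null, so $Z = Z^{\mbb{P}}$ holds $\mbb{P}$-a.s.; passing to the universal completion $\mc{F}_t^{\mathfrak{P}}$ then allows me to modify $Z$ on a universally null set to obtain genuinely \cadlag sample paths. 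Since $I_{\mathfrak{P}}(\sigma(Z_{-\cdot}), X) = I_{\mbb{P}}(\sigma(Z^{\mbb{P}}_{-\cdot}), X)$ $\mbb{P}$-a.s.\ for each $\mbb{P} \in \mathfrak{P}$, the aggregated process $Z$ solves the SDE. Uniqueness in the stated $\mbb{P}$-wise sense follows from the same Gronwall estimate applied to the difference of two candidate solutions.

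The main obstacle is the aggregation step: the $\mbb{P}$-dependent strong solutions must be realised as \emph{one} process with \cadlag paths, and this is precisely what the intrinsic $\mbb{P}$-independence of the stochastic integral in Definition~\ref{main-10} and the passage to the universally completed filtration $\mc{F}_t^{\mathfrak{P}}$ are designed to accomplish. In practice, since the theorem is stated as a direct consequence of \cite[Theorem 4.57]{julian}, one would simply verify the hypotheses of that result (a locally bounded, Lipschitz coefficient $\sigma$ and absolute continuity of the characteristics under every $\mbb{P} \in \mathfrak{P}$) and invoke it, rather than redo the aggregation argument from scratch.
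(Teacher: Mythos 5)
The paper gives no independent proof of Theorem~\ref{main-105}: it is stated as a direct consequence of \cite[Theorem 4.57]{julian}, so your closing remark --- check that for Lipschitz $\sigma$ the integrand $\sigma(Z_{\cdot-})$ is predictable and locally bounded and then invoke that theorem --- is exactly the route the paper takes. Your Picard-plus-aggregation sketch is a reconstruction of what lies behind that citation, and its structural core is right: since $I_{\mathfrak{P}}(H,X)$ of Definition~\ref{main-10} is a \emph{single} $(\mc{F}_{t+}^{\mathfrak{P}})$-adapted process rather than a family of $\mbb{P}$-equivalence classes, the iterates $Z^{(n)}$ are genuine processes and a measure-free limit can be extracted.

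The per-measure convergence step, however, has a concrete gap as written. The estimate $\mbb{E}_{\mbb{P}} \sup_{s \leq t \wedge \tau_k} |Z^{(n+1)}_s - Z^{(n)}_s|^2 \leq (C t)^n / n!$ needs the localized driver to have square-integrable jumps, and localizing ``so that $(b^{\mbb{P}},Q^{\mbb{P}},\nu^{\mbb{P}})$ is bounded'' does not provide this: boundedness of the characteristics means at most a bound on $\int_{y \neq 0} \min\{1,|y|^2\}\,\nu_s^{\mbb{P}}(dy)$, which says nothing about $\int_{|y|>1} |y|^2 \, \nu_s^{\mbb{P}}(dy)$, and a stopping time cannot remove the large jump occurring at the stopping time itself; note also that Definition~\ref{main-10} imposes no boundedness on the characteristics in the first place. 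For heavy-tailed $\nu^{\mbb{P}}$ the claimed $L^2$ bound is simply false. The standard repair is either to interlace the large jumps or to run the fixed-point scheme for \eqref{main-eq23} under each fixed $\mbb{P}$ in the framework for Lipschitz SDEs driven by arbitrary semimartingales (\cite[Chapter V]{protter}), which converges in the ucp sense without any moment assumptions; your aggregation step can then be kept unchanged. In that step, do not take a pointwise $\limsup$ and ``modify on a universally null set'' --- a union of $\mbb{P}$-null sets over an uncountable $\mathfrak{P}$ need not be null for any measure; instead define $Z$ as the limit on the single measurable event where the iterates converge locally uniformly (automatically yielding \cadlag paths) and $Z := x$ off this event, which has full probability under every $\mbb{P} \in \mathfrak{P}$ simultaneously. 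Finally, there is a small mismatch between the $\mc{F}_{t+}^{\mathfrak{P}}$-adaptedness inherited from the stochastic integral and the $\mc{F}_t^{\mathfrak{P}}$-adaptedness asserted in the statement; this point is resolved in \cite[Theorem 4.57]{julian}, which is another reason the clean argument is to verify its hypotheses and cite it, as the paper does.
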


Note that the solution $(Z_t)_{t \geq 0}$ to \eqref{main-eq23} satisfies by the very definition of the stochastic integral, cf.\ Definition~\ref{main-10}, \begin{equation*}
	Z_t = x + I_{\mbb{P}}(\sigma(Z_{-}),X)_t = x+ \int_0^t \sigma(Z_{s-}) \, dX_s
\end{equation*}
$\mbb{P}$-almost surely for any $\mbb{P} \in \mathfrak{P}$. \par \medskip

Theorem~\ref{main-3} allows us to study the evolution equations of solutions to stochastic differential equations driven by a L\'evy process for sublinear expectations. 
We obtain the following existence and uniqueness result which generalizes \cite[Remark 4.62]{julian}.

\begin{kor} \label{main-11}
	Let $(X_t)_{t \geq 0}$ be the $d$-dimensional sublinear L\'evy process from Corollary~\ref{main-7} with uniformly bounded uncertainty coefficients $(b_{\alpha},Q_{\alpha},\nu_{\alpha})$, $\alpha \in I$, and truncation function $h$.  Assume that the uncertainty coefficients satisfy the tightness condition \eqref{main-eq17} and that $\bigcup_{\alpha \in I} \{(b_{\alpha},Q_{\alpha},\nu_{\alpha})\}$ is convex and closed (see Remark~\ref{main-12} below). For a Lipschitz continuous function $\sigma: \mbb{R}^k \to \mbb{R}^{k \times d}$ which grows at most sublinearly let $(Z_t^x)_{t \geq 0}$ be the unique solution to the SDE \begin{equation*}
		Z^x = x +I_{\mathfrak{P}^0}(\sigma(Z_{\cdot-}^x),X), 
	\end{equation*}
	cf.\ Theorem~\ref{main-105}, for the uncertainty subset $\mathfrak{P}^0$ defined in \eqref{main-eq13}. If $f: \mbb{R}^k \to \mbb{R}$ is a continuous bounded function, then \begin{equation*}
		u(t,x) := P_t f(x) := \mc{E}^0 f(Z_t^x) := \sup_{\mbb{P} \in \mathfrak{P}^0} \mbb{E}_{\mbb{P}} f(Z_t^x), \qquad t \geq 0, \, x \in \mbb{R}^k,
	\end{equation*}
	is the unique viscosity solution to \begin{align} 
		\label{main-eq25} \begin{aligned}
			\partial_t u(t,x) &- \sup_{\alpha \in I} \bigg( \sigma(x) b_{\alpha} \nabla_x u(t,x) + \frac{1}{2} \tr\left(\sigma(x) Q_{\alpha} \sigma(x)^T \nabla^2_x u(t,x)\right) \\
			&+ \int_{y \neq 0} \big( u(t,x+\sigma(x) \cdot y)-u(t,x) - \nabla_x u(t,x) \sigma(x) h(y) \big) \, \nu_{\alpha}(dy) \bigg) = 0
		\end{aligned}
	\end{align}
	with $u(0,x)=f(x)$.
\end{kor}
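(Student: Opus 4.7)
The plan is to realize $(P_t)_{t \geq 0}$ as a sublinear Markov semigroup of the form required by Theorem~\ref{main-3} on the canonical Skorohod space $D_x([0,\infty), \mbb{R}^k)$, and then to verify the assumptions of that theorem. For each $\mbb{P} \in \mathfrak{P}^0$ set $\tilde{\mbb{P}}_x := \mbb{P} \circ (Z^x)^{-1}$ and $\mathfrak{Q}_x := \{\tilde{\mbb{P}}_x : \mbb{P} \in \mathfrak{P}^0\}$. By the standard transformation of semimartingale characteristics under stochastic integrals (cf.\ \cite{jacod}), together with the fact that under $\mbb{P}$ the process $X$ has differential characteristics valued in $\bigcup_\alpha \{(b_\alpha, Q_\alpha, \nu_\alpha)\}$, the canonical process $Y$ on $D_x$ is under each $\tilde{\mbb{P}}_x$ a semimartingale whose differential characteristics at time $s$ take values in
\[ \bigcup_{\alpha \in I}\{(\sigma(Y_{s-}) b_\alpha,\, \sigma(Y_{s-}) Q_\alpha \sigma(Y_{s-})^T,\, \nu_\alpha \circ \sigma(Y_{s-})^{-1})\}, \]
where the non-standard truncation $\sigma(x) h(\cdot)$ appearing in \eqref{main-eq25} is precisely the one for which no additional drift correction arises. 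Hence $P_t f(x) = \sup_{\tilde{\mbb{P}} \in \mathfrak{Q}_x} \mbb{E}_{\tilde{\mbb{P}}} f(Y_t)$ and $\mathfrak{Q}_x$ satisfies \eqref{main-eq7} for the transformed uncertainty coefficients $\tilde b_\alpha(x) := \sigma(x) b_\alpha$, $\tilde Q_\alpha(x) := \sigma(x) Q_\alpha \sigma(x)^T$, $\tilde\nu_\alpha(x, \cdot) := \nu_\alpha \circ \sigma(x)^{-1}$.

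Next I would verify the remaining hypotheses. Conditions \eqref{C4} and \eqref{C5} follow from the convexity and closedness of $\bigcup_\alpha\{(b_\alpha,Q_\alpha,\nu_\alpha)\}$ together with the existence theory for SDEs on sublinear expectation spaces (Theorem~\ref{main-105}, \cite[Theorem~4.61]{julian}); conditions \eqref{C2} and \eqref{C3} on every compact $K \subseteq \mbb{R}^k$ follow directly from the continuity and local boundedness of $\sigma$ combined with the uniform bound on $(b_\alpha, Q_\alpha, \nu_\alpha)$. Continuity of $(t,x) \mapsto u(t,x)$ in $t$ is supplied by Theorem~\ref{max-5}; continuity in $x$ follows from the standard moment estimate $\sup_{\mbb{P} \in \mathfrak{P}^0} \mbb{E}_\mbb{P} \sup_{s \leq t} |Z^x_s - Z^y_s|^2 \leq C_t |x-y|^2$, which is available because the uncertainty coefficients of $X$ are uniformly bounded and $\sigma$ is Lipschitz.

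The core step is the verification of condition \eqref{A1} for the transformed symbol. Since the truncation in \eqref{main-eq25} matches the change of variables exactly, a direct calculation using \eqref{sym} gives $q_\alpha(x, \xi) = \psi_\alpha(\sigma(x)^T \xi)$, where $\psi_\alpha$ is the characteristic exponent of the L\'evy process with triplet $(b_\alpha, Q_\alpha, \nu_\alpha)$. The tightness condition \eqref{main-eq17}, together with the uniform bound on the L\'evy triplets, yields equi-continuity of $\{\psi_\alpha\}_{\alpha \in I}$ at $0$: splitting the jump integral into $\{|y|\le\delta\}$, $\{\delta<|y|\le R\}$, $\{|y|>R\}$ and using \eqref{main-eq17} on the two extreme regions shows $\sup_\alpha |\psi_\alpha(\eta)| \to 0$ as $\eta \to 0$. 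Combined with the strictly sublinear growth of $\sigma$---which, writing $g(s) := \sup_{|y| \leq s}|\sigma(y)|$, gives $|\sigma(z)^T \xi| \leq g(|x|+r)/r \to 0$ as $r \to \infty$ uniformly for $|z-x| \leq r$, $|\xi| \leq 1/r$---this yields \eqref{A1}. Theorem~\ref{main-3} then shows that $u(t,x) = P_t f(x)$ is a viscosity solution to \eqref{main-eq25}, and uniqueness follows from the comparison principle \cite[Corollary~2.34]{julian} recalled in Remark~\ref{main-5}\eqref{main-5-i}. The main obstacle is precisely this last step: the verification that strictly sublinear growth of $\sigma$ combined with \eqref{main-eq17} is exactly enough to secure \eqref{A1} is what allows one to dispense with the boundedness assumption on $\sigma$ made in \cite[Remark~4.62]{julian}.
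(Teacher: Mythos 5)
Your overall strategy is the same as the paper's (push the uncertainty set $\mathfrak{P}^0$ forward under the solution map $Z^x$, verify the hypotheses of Theorem~\ref{main-3} for the transformed coefficients, identify the symbol as $\hat q_\alpha(x,\xi)=\psi_\alpha(\sigma(x)^T\xi)$, use tightness plus sublinear growth of $\sigma$ for \eqref{A1}, and conclude uniqueness from the comparison principle). However, two steps as you wrote them do not go through.

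First, the truncation bookkeeping. Differential characteristics, the structural inclusion \eqref{main-eq7}, condition \eqref{C5} and the HJB equation in Theorem~\ref{main-3} are all formulated with respect to a \emph{fixed} truncation function on the state space $\mbb{R}^k$; there is no notion of characteristics with respect to the $x$-dependent map $y\mapsto\sigma(x)h(y)$ (which is not even a function of the jump $z=\sigma(x)y$ when $\sigma(x)$ is not injective, e.g.\ if $k\neq d$). So your claim that ``no additional drift correction arises'' cannot be used to verify the hypotheses of Theorem~\ref{main-3}. The correct route, which is what the paper does, is to fix a Lipschitz truncation function $\hat h:\mbb{R}^k\to\mbb{R}^k$ and to work with the transformed triplets $\hat b_\alpha(x)=\sigma(x)b_\alpha-\int_{y\neq 0}\big(\sigma(x)h(y)-\hat h(\sigma(x)y)\big)\,\nu_\alpha(dy)$, $\hat Q_\alpha(x)=\sigma(x)Q_\alpha\sigma(x)^T$, $\hat\nu_\alpha(x,\cdot)=\nu_\alpha\circ\sigma(x)^{-1}$ (cf.\ \cite[Proposition 9.5.3]{jacod}); the drift correction then cancels when the resulting equation is rewritten, yielding \eqref{main-eq25}. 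Moreover, the verification of \eqref{C2} and \eqref{C3} for these transformed coefficients genuinely uses the tightness condition \eqref{main-eq17} (both the small- and large-jump parts) together with the Lipschitz continuity of $\sigma$ \emph{and} $\hat h$ -- ``continuity and local boundedness of $\sigma$ combined with the uniform bound'' is not enough to get equicontinuity uniformly in $\alpha$.

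Second, the continuity in $x$. The moment estimate $\sup_{\mbb{P}\in\mathfrak{P}^0}\mbb{E}_{\mbb{P}}\sup_{s\leq t}|Z^x_s-Z^y_s|^2\leq C_t|x-y|^2$ is in general false under the standing assumptions: the L\'evy measures are only required to satisfy $\sup_\alpha\int\min\{1,|y|^2\}\,\nu_\alpha(dy)<\infty$ plus \eqref{main-eq17}, so the large jumps need not have finite second moments (think of $\nu_\alpha(dy)=|y|^{-d-1}dy$), and then $Z^x_t-Z^y_t$ need not be square integrable at all, since its jumps are of size $(\sigma(Z^x_{s-})-\sigma(Z^y_{s-}))\Delta X_s$. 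This is precisely the point where the convexity and closedness of $\bigcup_{\alpha\in I}\{(b_\alpha,Q_\alpha,\nu_\alpha)\}$ enter: the paper obtains the continuity of $x\mapsto P_tf(x)$ from \cite[Proposition 4.61]{julian} (and the semigroup property \eqref{C4} from \cite[Proposition 4.60]{julian}), and only the continuity in $t$, locally uniformly in $x$, from Theorem~\ref{max-5}. Your proposal either needs to cite such a stability result or replace the $L^2$ argument by one compatible with heavy-tailed jumps (e.g.\ localization/convergence in probability); as written, this step fails. The remaining ingredients -- the identification $\hat q_\alpha(x,\xi)=\psi_\alpha(\sigma(x)^T\xi)$, the verification of \eqref{A1} via $\sup_\alpha\sup_{|\eta|\leq cr^{-\eps}}|\psi_\alpha(\eta)|\to 0$, and the uniqueness via \cite[Corollary 2.34]{julian} -- agree with the paper.
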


\begin{bem_thm} \label{main-12}
	In Corollary~\ref{main-11} we assume that the set $ \{(b_{\alpha},Q_{\alpha},\nu_{\alpha}); \alpha \in I\}$ is closed; let us explain which topology we consider. For each $\alpha \in I$ the L\'evy triplet $(b_{\alpha},Q_{\alpha},\nu_{\alpha})$ is an element of the cartesian product \begin{equation*}
		\Pi := \mbb{R}^d \times \mbb{S}^{d \times d}_+ \times \mathfrak{L}(\mbb{R}^d)
	\end{equation*}
	where $\mbb{S}^{d \times d}_+$ is the space of symmetric semi-positive definite matrices $Q \in \mbb{R}^{d \times d}$ and $\mathfrak{L}(\mbb{R}^d)$ is the family of L\'evy measures on $\mbb{R}^d \backslash \{0\}$. We consider $\Pi$ endowed with the product topology; as usual, $\mbb{R}^d$ and $\mbb{S}^{d \times d}_+$ are endowed with the Euclidean metric, and the topology on $\mathfrak{L}(\mbb{R}^d)$ is induced by the Wasserstein metric \begin{equation*}
		d_W(\mu,\nu) := \sup_{f}  \left| \int_{\mbb{R}^d} f(x) \, \min\{|x|^2,1\} \, \mu(dx) - \int_{\mbb{R}^d} f(x) \, \min\{|x|^2,1\} \, \nu(dx) \right|
	\end{equation*}	
	where the supremum is taken over all functions $f:\mbb{R}^d \to \mbb{R}$ which satisfy \begin{equation*}
		\sup_{x \in \mbb{R}^d} |f(x)| \leq 1 \qquad \text{and} \qquad \sup_{x \neq y} \frac{|f(x)-f(y)|}{|x-y|} \leq 1.
	\end{equation*}
\end{bem_thm}

We close this section with a result on HJB equations associated with Feller processes on classical probability spaces. Recall that a Markov process $(X_t)_{t \geq 0}$ with \cadlag sample paths and semigroup $T_t f(x) = \mbb{E}^x f(X_t)$ is a Feller process if $(T_t)_{t \geq 0}$ is strongly continuous on $C_{\infty}(\mbb{R}^d)$, the space of continuous functions vanishing at infinity, i.\,e.\ \begin{equation*}
	\forall f \in C_{\infty}(\mbb{R}^d): \quad \lim_{t \to 0} \|T_t f-f\|_{\infty} = 0,
\end{equation*}
and each $T_t$ has the Feller property, i.\,e.\ $T_t(C_{\infty}(\mbb{R}^d)) \subseteq C_{\infty}(\mbb{R}^d)$. In particular, $(T_t)_{t \geq 0}$ is a sublinear Markov semigroup on $\mc{H} := C_{\infty}(\mbb{R}^d)$ in the sense of Definition~\ref{main-0}. The sublinear infinitesimal generator associated with this sublinear Markov semigroup coincides with the classical infinitesimal generator associated with the Feller process $(X_t)_{t \geq 0}$; this follows from the maximal dissipativity of the (classical) infinitesimal generator, see e.\,g.\ \cite[Theorem 1.33]{ltp}. If $(X_t)_{t \geq 0}$ is a Feller process with infinitesimal generator $(A,\mc{D}(A))$ which satisfies $C_c^{\infty}(\mbb{R}^d) \subseteq \mc{D}(A)$, then $A$ equals, when restricted to $C_c^{\infty}(\mbb{R}^d)$, a pseudo-differential operator with negative definite symbol $q$, cf.\ \eqref{pseudo} and \eqref{sym}. For a detailed discussion of Feller processes and their infinitesimal generators we refer to the monographs \cite{ltp,jac2}.

\begin{kor} \label{main-13}
	Let $(X_t)_{t \geq 0}$ be a $d$-dimensional Feller process with semigroup $(T_t)_{t \geq 0}$ and infinitesimal generator $(A,\mc{D}(A))$ such that $C_c^{\infty}(\mbb{R}^d) \subseteq \mc{D}(A)$. Assume that characteristics $(b,Q,\nu)$ of the associated symbol $q$ depends uniform continuously on $x$, in the sense that $x \mapsto b(x)$, $x \mapsto Q(x)$ are uniformly continuous and that $x \mapsto \int_{y \neq 0} g(y) \, \nu(x,dy)$ is uniformly continuous for any $g \in C_b^1(\mbb{R}^d)$ such that $|g(y)| \leq \min\{1,|y|^2\}$, $y \in \mbb{R}^d$.	The mapping $u(t,x) := T_t f(x)$ is a viscosity solution to \begin{align} \label{main-eq31} \begin{aligned}
		\partial_t u(t,x)  - \bigg( b(x) \cdot \nabla_x u(t,x) &+ \frac{1}{2} \tr\big(Q(x) \nabla^2_x u(t,x)\big)  \\
		&+ \int_{y \neq 0} \big( u(t,x+y)-u(t,x) - \nabla_x u(t,x) \cdot h(y) \big) \, \nu(x,dy) \bigg)= 0 \end{aligned}
	\end{align}
	with $u(0,x)=f(x)$ in each of the following cases: \begin{enumerate}
		\item\label{main-13-i} $f \in C_{\infty}(\mbb{R}^d)$ and $q$ has bounded coefficients, i.\,e.\ \begin{equation*}
				\sup_{x \in \mbb{R}^d} \left( |b(x)|+|Q(x)| + \int_{y \neq 0} \min\{1,|y|^2\} \, \nu(x,dy) \right) < \infty.
			\end{equation*}
		\item\label{main-13-ii} $f \in C_b(\mbb{R}^d)$ and $q$ satisfies the uniform continuity condition \begin{equation}
				\lim_{r \to \infty} \sup_{|\xi| \leq r^{-1}} \sup_{|x| \leq r} |q(x,\xi)| = 0. \label{main-eq29}
			\end{equation}
		\end{enumerate}
\end{kor}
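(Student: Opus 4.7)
The approach is to recognize Corollary~\ref{main-13} as the specialization of Theorem~\ref{main-3} to the trivial index set $I = \{\alpha_0\}$ with singleton uncertainty subsets $\mathfrak{P}_x = \{\mbb{P}^x\}$, where $\mbb{P}^x$ denotes the law on the Skorohod space of the Feller process $(X_t)_{t \geq 0}$ started at $x$. With this choice the supremum degenerates and $T_t f(x) = \sup_{\mbb{P} \in \mathfrak{P}_x} \mbb{E}_{\mbb{P}} f(X_t) = \mbb{E}^x f(X_t)$ coincides with the classical Feller semigroup, so \eqref{main-eq11} reduces to \eqref{main-eq31}. The task is therefore to verify the hypotheses \eqref{C4}, \eqref{C5}, and either \eqref{A1} or \eqref{A2} of Theorem~\ref{main-3}, and to establish joint continuity of $u(t,x) = T_t f(x)$.

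For \eqref{C5}, the assumption $C_c^\infty(\mbb{R}^d) \subseteq \mc{D}(A)$ together with the integro-differential representation of $A|_{C_c^\infty}$ yields via Dynkin's formula that $\phi(X_t) - \phi(X_0) - \int_0^t A\phi(X_s)\,ds$ is a $\mbb{P}^x$-martingale for every $\phi \in C_c^\infty$; by a standard localization extending the martingale problem to $C_b^2$ (cf.\ \cite{ltp}) this identifies $(X_t)$ as a semimartingale with differential characteristics $(b(X_{s-}), Q(X_{s-}), \nu(X_{s-}, \cdot))$ under each $\mbb{P}^x$. The stated uniform continuity hypotheses on $b$, $Q$ and $x \mapsto \int g(y)\,\nu(x,dy)$ are precisely \eqref{C2} and \eqref{C3} with $K = \mbb{R}^d$, and in particular on every compact subset; the local boundedness \eqref{main-eq4} then follows from continuity. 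Case~\eqref{main-13-i} gives \eqref{A2} by the assumed uniform boundedness of the triplet, while case~\eqref{main-13-ii} gives \eqref{A1} directly from \eqref{main-eq29}.

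It remains to verify \eqref{C4} on a suitable convex cone $\mc{H}$ and the joint continuity of $u$. For case~\eqref{main-13-i} I take $\mc{H} = C_\infty(\mbb{R}^d) + \mbb{R}$: the Feller property $T_t(C_\infty) \subseteq C_\infty$ combined with strong continuity on $C_\infty$ yields both the invariance of $\mc{H}$ and joint continuity of $u$ for $f \in C_\infty$. For case~\eqref{main-13-ii} I take $\mc{H} = C_b(\mbb{R}^d)$ and argue by truncation: with $\chi_R \in C_c^\infty(\mbb{R}^d)$ equal to $1$ on $B(0,R)$, decompose $f = f\chi_R + f(1-\chi_R)$. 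The first term lies in $C_c^\infty \subseteq C_\infty$, so $T_t(f\chi_R) \in C_\infty$ by the Feller property, while the remainder is controlled by $\|f\|_\infty \mbb{P}^x(|X_t| > R)$, which tends to $0$ uniformly on compacts in $x$ by the maximal inequality of Section~\ref{max}. This yields simultaneously $T_t f \in C_b$, continuity of $x \mapsto T_t f(x)$, and--combined with dominated convergence along the \cadlag paths of $X$--continuity in $t$; local uniformity of both gives joint continuity. With every hypothesis of Theorem~\ref{main-3} verified, the theorem applies and identifies $u$ as a viscosity solution of \eqref{main-eq31}. The principal obstacle is the truncation argument in case~\eqref{main-13-ii}: the $C_\infty$-Feller property alone does not preserve $C_b$, so the maximal inequality of Section~\ref{max} is essential to upgrade it to a $C_b$-Feller property under the weaker hypothesis \eqref{A1}.
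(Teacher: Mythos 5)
Your overall strategy is the same as the paper's: specialize Theorem~\ref{main-3} to a singleton index set with $\mathfrak{P}_x=\{\mbb{P}^x\}$, check \eqref{C2}, \eqref{C3}, \eqref{C5} and \eqref{A2} resp.\ \eqref{A1}, and then verify \eqref{C4} together with joint continuity of $(t,x)\mapsto T_tf(x)$. The divergence is in case~\eqref{main-13-ii}: the paper obtains the $C_b$-Feller property by first proving conservativeness from \eqref{main-eq29} via \cite[Theorem 5.5]{cons} and then invoking the implication $T_t(C_\infty(\mbb{R}^d))\subseteq C_\infty(\mbb{R}^d)$, $T_t1\in C_b(\mbb{R}^d)$ $\Rightarrow$ $T_t(C_b(\mbb{R}^d))\subseteq C_b(\mbb{R}^d)$ (\cite[Theorem 1.9]{ltp}), and it gets time-continuity uniformly on compacts from Theorem~\ref{max-5}; your truncation $f=f\chi_R+f(1-\chi_R)$ with the maximal inequality (Proposition~\ref{max-1}) controlling the remainder is a legitimate, more self-contained substitute for the spatial part (note only that $f\chi_R\in C_c\subseteq C_\infty$, not $C_c^\infty$, which is all you need).

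Two points fall short as written. First, conservativeness is never addressed, yet $T_t(c)=c$ is part of Definition~\ref{main-0}(iii), so \eqref{C4} on $\mc{H}=C_\infty(\mbb{R}^d)+\mbb{R}$ or $\mc{H}=C_b(\mbb{R}^d)$ is not free; moreover $\mbb{P}^x\in\mathfrak{P}_{\sem}^{\ac}(D_x)$ and the maximal inequality you use presuppose that the process has $\mbb{R}^d$-valued paths for all times. In case~\eqref{main-13-ii} this is exactly what \cite[Theorem 5.5]{cons} supplies under \eqref{main-eq29}, and in case~\eqref{main-13-i} it follows from the bounded-coefficients assumption; without such an argument your use of the maximal inequality to bound $\mbb{P}^x(|X_t|>R)$ is circular for a possibly non-conservative process. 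Second, the passage ``dominated convergence \dots continuity in $t$; local uniformity of both gives joint continuity'' asserts precisely what needs proof: dominated convergence along the paths gives only pointwise continuity in $t$ for fixed $x$, and separate continuity does not imply joint continuity. You either need Theorem~\ref{max-5}\eqref{max-5-iii} (the paper's route, which gives $t$-continuity uniformly in $x$ on compacts), or you should assemble the pieces you already have: strong continuity of $(T_t)_{t\geq0}$ on $C_\infty(\mbb{R}^d)$ makes $(t,x)\mapsto T_t(f\chi_R)(x)$ jointly continuous, and the remainder $T_t(f(1-\chi_R))$ is small uniformly for $x$ in compacts and $t$ in bounded intervals by the maximal inequality, which yields joint continuity by a locally uniform limit. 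With these repairs your argument is correct and essentially parallel to the paper's, with the citation of \cite[Theorem 1.9]{ltp} replaced by your truncation argument.
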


\begin{bem_thm} \label{main-15} \begin{enumerate}
	\item  If \eqref{main-eq29} holds then the continuity assumptions on the coefficients can be relaxed; it suffices to assume that the coefficients depend continuously on $x$, i.\,e.\ that the mappings $x \mapsto b(x)$, $x \mapsto Q(x)$ and $x \mapsto \int g(y) \, \nu(x,dy)$ are continuous. This is a direct consequence of Theorem~\ref{main-3} and the proof of Corollary~\ref{main-13}.
	\item Corollary~\ref{main-13} applies, in particular, if $(X_t)_{t \geq 0}$ is a L\'evy process, see also Corollary~\ref{main-8}.
\end{enumerate}
\end{bem_thm}

\section{Maximal inequality for sublinear expectations} \label{max}

Let $(X_t)_{t \geq 0}$ be the canonical process on the Skorohod space. In the first part of this section we establish a maximal inequality of the form \begin{equation*}
	\mc{P}^x \left( \sup_{s \leq t} |X_s-x|>r \right) \leq c_r t, \qquad t \geq 0,
\end{equation*}
for sublinear expectations $\mc{P}^x = \sup_{\mbb{P} \in \mathfrak{P}_x} \mbb{P}$ with uncertainty subset $\mathfrak{P}_x$,\begin{equation*}
	\mathfrak{P}_x \subseteq \left\{ \mbb{P} \in \mathfrak{P}_{\sem}^{\ac}(D_x); (b_s^{\mbb{P}},Q_s^{\mbb{P}},\nu_s^{\mbb{P}})(\omega) \in \bigcup_{\alpha \in I} \{(b_{\alpha},Q_{\alpha},\nu_{\alpha})(X_s(\omega))\} \, \, \lambda(ds)\times \mbb{P}\text{-a.s.}\right\}.
\end{equation*}
The idea of the proof goes back to Schilling \cite{rs-growth} who obtained a maximal inequality for Feller processes, see also \cite[Theorem 5.1]{ltp}. The maximal inequality has turned out to be a very useful tool to study distributional and path properties of Feller processes, cf.\ \cite{ltp}. Recently, a localized version of the maximal inequality was derived in \cite{ihke} to study domains of Feller generators, and in \cite{perpetual} the maximal inequality was used in the context of martingale problems to give a sufficient condition for the non-explosion of solutions. Since the proof of the maximal inequality for Feller processes relies essentially on Dynkin's formula, we can extend it to our framework.

\begin{prop}[Maximal inequality] \label{max-1}
	Let $(b_{\alpha}(x),Q_{\alpha}(x),\nu_{\alpha}(x,\cdot))$, $x \in \mbb{R}^d$, $\alpha \in I$, be a family of L\'evy triplets which is uniformly bounded on compact sets, i.\,e.\ which satisfies \eqref{main-eq4}. For a given truncation function $h$ denote by \begin{equation*}
		q_{\alpha}(x,\xi) = -ib_{\alpha}(x) \cdot \xi  + \frac{1}{2} \xi \cdot Q_{\alpha}(x) \xi + \int_{y \neq 0} (1-e^{iy \cdot \xi}+i \xi \cdot h(y)) \, \nu_{\alpha}(x,dy)
	\end{equation*}
	the associated family of continuous negative definite symbols. For any uncertainty subset $\mathfrak{P}_x$ with \begin{equation}
		\mathfrak{P}_x \subseteq \left\{ \mbb{P} \in \mathfrak{P}_{\sem}^{\ac}(D_x); (b_s^{\mbb{P}},Q_s^{\mbb{P}},\nu_s^{\mbb{P}})(\omega) \in \bigcup_{\alpha \in I} \{(b_{\alpha},Q_{\alpha},\nu_{\alpha})(X_s(\omega))\} \, \, \lambda(ds)\times \mbb{P}\text{-a.s.}\right\} \label{max-eq13}
	\end{equation}
	there exists  $c>0$ such that \begin{equation}
		\sup_{\mbb{P} \in \mathfrak{P}_x} \mbb{P} \left( \sup_{s \leq t} |X_s-x|> r \right) \leq ct \sup_{\alpha \in I} \sup_{|z-x| \leq r} \sup_{|\xi| \leq r^{-1}} |q_{\alpha}(z,\xi)| \fa t \geq 0, \, r>0. \label{max-eq15}
	\end{equation}
	The constant $c>0$ can be chosen independently from the starting point $x \in \mbb{R}^d$ and the family $(b_{\alpha}(x),Q_{\alpha}(x),\nu_{\alpha}(x,\cdot))$, $\alpha \in I$, $x \in \mbb{R}^d$.
\end{prop}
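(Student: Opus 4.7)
The plan is to adapt the classical cutoff argument of Schilling for Feller processes (cf.\ \cite[Theorem 5.1]{ltp}) to the present sublinear semimartingale setting. Fix $x \in \mbb{R}^d$, $r > 0$, $t \geq 0$, and choose once and for all a universal function $\chi \in C_c^{\infty}(\mbb{R}^d)$ with $0 \leq \chi \leq 1$, $\chi(0) = 1$, and $\chi(y) = 0$ for $|y| \geq 1$. Define $\phi_r(y) := 1 - \chi(y/r) \in C_b^{\infty}(\mbb{R}^d)$; then $\phi_r(0) = 0$, $0 \leq \phi_r \leq 1$, and $\phi_r(y) = 1$ whenever $|y| \geq r$. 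Introduce the stopping time $\tau := \inf\{s \geq 0 ; |X_s - x| \geq r\}$. By the \cadlag property $|X_\tau - x| \geq r$ on $\{\tau < \infty\}$, hence $\phi_r(X_{\tau \wedge t} - x) \geq \I_{\{\tau \leq t\}}$, and since $\{\sup_{s \leq t} |X_s - x| > r\} \subseteq \{\tau \leq t\}$ it suffices to bound $\sup_{\mbb{P} \in \mathfrak{P}_x} \mbb{P}(\tau \leq t)$.

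For each $\mbb{P} \in \mathfrak{P}_x$ the defining property of the differential characteristics implies that $\phi_r(X_t - x) - \int_0^t L_s^{\mbb{P}} \phi_r(\cdot - x)(X_{s-}) \, ds$ is a local $\mbb{P}$-martingale, where
\begin{equation*}
	L_s^{\mbb{P}} f(w) := b_s^{\mbb{P}} \cdot \nabla f(w) + \tfrac{1}{2} \tr\bigl(Q_s^{\mbb{P}} \nabla^2 f(w)\bigr) + \int_{y \neq 0} \bigl( f(w+y) - f(w) - \nabla f(w) \cdot h(y) \bigr) \, \nu_s^{\mbb{P}}(dy).
\end{equation*}
The uniform boundedness \eqref{main-eq4} combined with \eqref{max-eq13} and the fact that $X_{s-} \in \overline{B(x, r)}$ for $s \leq \tau$ guarantees that this integrand is uniformly bounded on $[0, \tau \wedge t]$, so optional stopping yields a bounded martingale and
\begin{equation*}
	\mbb{P}(\tau \leq t) \leq \mbb{E}_{\mbb{P}} \phi_r(X_{\tau \wedge t} - x) = \mbb{E}_{\mbb{P}} \int_0^{\tau \wedge t} L_s^{\mbb{P}} \phi_r(\cdot - x)(X_{s-}) \, ds.
\end{equation*}
By \eqref{max-eq13} the integrand equals, for $\lambda \otimes \mbb{P}$-almost every $(s, \omega) \in (0, \tau \wedge t] \times \Omega$, the quantity $L^{\alpha} \phi_r(\cdot - x)(X_{s-})$ for some $\alpha = \alpha(s, \omega) \in I$, where $L^{\alpha}$ is built from $(b_\alpha, Q_\alpha, \nu_\alpha)(X_{s-})$. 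The problem thus reduces to establishing the uniform pointwise estimate
\begin{equation*}
	|L^{\alpha} \phi_r(\cdot - x)(w)| \leq c \sup_{|\xi| \leq r^{-1}} |q_\alpha(w, \xi)| \quad \text{for all } \alpha \in I \text{ and } |w - x| \leq r,
\end{equation*}
with a constant $c$ depending only on $\chi$ and the dimension $d$.

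This pointwise bound is the main technical hurdle, and I would prove it by Fourier analysis. Since $L^{\alpha} 1 = 0$, it suffices to estimate $L^{\alpha} \psi_x(w)$, where $\psi_x(y) := \chi((y - x)/r)$ is compactly supported. Using the pseudo-differential representation \eqref{pseudo}, the Fourier scaling $\widehat{\psi_x}(\xi) = r^d e^{-ix \cdot \xi} \hat\chi(r\xi)$, and the substitution $\eta = r \xi$, one computes
\begin{equation*}
	L^{\alpha} \psi_x(w) = -\int_{\mbb{R}^d} q_\alpha(w, \eta/r) \, e^{i(w - x) \cdot \eta/r} \, \hat\chi(\eta) \, d\eta.
\end{equation*}
The sub-additivity $|q_\alpha(w, \xi + \eta)|^{1/2} \leq |q_\alpha(w, \xi)|^{1/2} + |q_\alpha(w, \eta)|^{1/2}$ of continuous negative definite symbols, iterated by bisection, yields the standard growth estimate
\begin{equation*}
	|q_\alpha(w, \eta/r)| \leq 4(1 + |\eta|^2) \sup_{|\xi| \leq r^{-1}} |q_\alpha(w, \xi)|,
\end{equation*}
uniformly in $w$ and $\alpha$. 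Since $\hat\chi$ is Schwartz, $\int (1 + |\eta|^2) |\hat\chi(\eta)| \, d\eta$ is a finite constant depending only on $\chi$ and $d$. Inserting these bounds into the Dynkin identity from the previous paragraph and taking the supremum over $\mbb{P} \in \mathfrak{P}_x$ gives \eqref{max-eq15} with a constant independent of $x$ and of the family $(b_\alpha, Q_\alpha, \nu_\alpha)$. The delicate points to handle carefully are the justification of optional stopping (straightforward from boundedness of the integrand) and the identification of $X_{s-}$ with $X_s$ for Lebesgue-almost every $s$, which is needed when reading off the selected L\'evy triplet from condition \eqref{max-eq13}.
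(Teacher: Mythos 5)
Your proposal is correct and follows essentially the same route as the paper: a scaled cut-off function, Dynkin's formula for the semimartingale obtained from the defining local-martingale property of the characteristics plus optional stopping at the exit time, the structural assumption \eqref{max-eq13} to pass to the worst case over $\alpha$, and the Fourier scaling argument combined with the estimate $|q_{\alpha}(z,\eta)|\leq c(1+|\eta|^2)\sup_{|\xi|\leq 1}|q_{\alpha}(z,\xi)|$ for continuous negative definite functions. The only differences are cosmetic (you work with $1-\chi(\cdot/r)$ instead of the cut-off itself and justify the martingale step directly by localization and dominated convergence rather than via a separately stated Dynkin lemma), and the delicate points you flag — boundedness of the integrand on the stochastic interval and $X_{s-}=X_s$ for Lebesgue-a.e.\ $s$ — are handled exactly as in the paper.
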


\begin{proof}
	Fix $\mbb{P} \in \mathfrak{P}_x$ and $u \in C_c^{\infty}(\mbb{R}^d)$ such that $\spt u \subseteq B(0,1)$ and $0 \leq u \leq 1 = u(0)$. If we set  $u_r^x(\cdot) := u((\cdot-x)/r)$, then Dynkin's formula, cf.\ Lemma~\ref{app-1}, shows that
	\begin{align*}
		\mbb{E}_{\mbb{P}} u_r^x(X_{\tau_r^x \wedge t})-1
		= \mbb{E}_{\mbb{P}} \left( \int_{(0,t \wedge \tau_r^x)} A_s^{\mbb{P}} u_r^x (X_{s-}) \, ds \right)
	\end{align*}
	where $\tau_r^x$ denotes the first exit time from the closed ball $\overline{B(x,r)}$ and \begin{align*}
		A_s^{\mbb{P}} f(z) := b_s^{\mbb{P}} \cdot \nabla f(z) + \frac{1}{2} \tr\left(Q_s^{\mbb{P}} \cdot \nabla^2 f(z)\right) + \int_{y \neq 0} \big(f(z+y)-f(z)-\nabla f(z) \cdot h(y) \big) \, \nu_s^{\mbb{P}}(dy). 
	\end{align*}
	Thus, \begin{align*}
		\mbb{P}(\tau_r^x \leq t)
		\leq \mbb{E}_{\mbb{P}} (1-u_r^x(X_{t \wedge \tau_r^x}))
		&= - \mbb{E}_{\mbb{P}} \left( \int_{(0,t \wedge \tau_r^x)} A_s^{\mbb{P}} u_r^x (X_{s-}) \, ds \right).
	\end{align*}
	Because of the structural assumption \eqref{max-eq13} this implies that \begin{align*}
		\mbb{P}(\tau_r^x \leq t)
		\leq \sup_{\alpha \in I} \left| \mbb{E}_{\mbb{P}} \left( \int_{(0,t \wedge \tau_r^x)} q_{\alpha}({X_{s-}},D) u_r^x(X_{s-}) \, ds \right) \right|;
	\end{align*}
	here $q_{\alpha}(z,D)$ denotes the pseudo-differential operator with symbol $q_{\alpha}$, cf.\ \eqref{pseudo}. Hence, \begin{align*}
		\mbb{P}(\tau_r^x \leq t)
		\leq t \sup_{\alpha \in I} \sup_{|z-x| \leq r} |q_{\alpha}(z,D) u_r^x(z)|.
	\end{align*}
	Since the Fourier transform $\hat{u}_r^x$ satisfies $|\hat{u}_r^x(\xi)| = r^d |\hat{u}(r \xi)|$ a change of variables gives \begin{align*}
		|q_{\alpha}(z,D) u_r^x(z)| 
		&= \left| \int_{\mbb{R}^d} q_{\alpha}(z,\xi) e^{iz \cdot \xi} \hat{u}_r^x(\xi) \, d\xi \right|
		\leq \int_{\mbb{R}^d} |q_{\alpha}(z,r^{-1} \xi)| \, |\hat{u}(\xi)| \, d\xi.
	\end{align*}
	Using that $|q_{\alpha}(z,\eta)| \leq 2 (1+|\eta|^2)\sup_{|\xi| \leq 1} |q_{\alpha}(z,\xi)|$, see e.\,g.\ \cite[Theorem 2.31]{ltp}, we conclude that \begin{equation*}
		\mbb{P}(\tau_r^x \leq t)
		\leq ct \sup_{\alpha \in I} \sup_{|z-x| \leq r} \sup_{|\xi| \leq r^{-1}} |q_{\alpha}(z,\xi)|
	\end{equation*}
	for $c:= 2 \int_{\mbb{R}^d} (1+|\xi|^2) |\hat{u}(\xi)| \, d\xi$, cf.\ \cite[Theorem 5.1]{ltp} for more details. Taking the supremum over $\mbb{P} \in \mathfrak{P}_x$, this proves the assertion.
\end{proof}

\begin{bem_thm} \label{max-2}
	If $K \subseteq \mbb{R}^d$ is such that \begin{equation*}
		\sup_{\alpha \in I} \sup_{x \in K} \left( |b_{\alpha}(x)| + |Q_{\alpha}(x)| + \int_{y \neq 0} \min\{|y|^2,1\} \, \nu_{\alpha}(x,dy) \right)<\infty,
	\end{equation*}
	then \begin{equation*}
		\sup_{\alpha \in I} \sup_{x \in K} \sup_{|\xi| \leq r^{-1}} |q_{\alpha}(x,\xi)| < \infty
	\end{equation*}
	for any $r>0$; this follows easily from the fact that we can find for any truncation function $h$ a constant $C>0$ such that \begin{equation*}
		|1-e^{iy \cdot \xi} + ih(y) \cdot \xi| \leq C \min\{1,|y|^2 |\xi|^2\} \fa y,\xi \in \mbb{R}^d.
	\end{equation*}
	In particular, the boundedness condition \eqref{main-eq4} ensures that the supremum on the right-hand side of \eqref{max-eq15} is finite.
\end{bem_thm}

The maximal inequality allows us to study the regularity of the mapping $t \mapsto T_t f(x)$ for sublinear Markov semigroups $(T_t)_{t \geq 0}$. 

\begin{thm}[Continuity in time]\label{max-5}
	Let $q_{\alpha}(x,\cdot): \mbb{R}^d \to \mbb{C}$, $x \in \mbb{R}^d$, $\alpha \in I$, be a family of continuous negative definite functions with characteristics $(b_{\alpha}(x),Q_{\alpha}(x),\nu_{\alpha}(x,\cdot))$, $\alpha \in I$, $x \in \mbb{R}^d$, which is uniformly bounded on compact sets, i.\,e.\ which satisfies \eqref{main-eq4}. Let \begin{equation*}
		\mathfrak{P}_x \subseteq \left\{ \mbb{P} \in \mathfrak{P}_{\sem}^{\ac}(D_x); (b_s^{\mbb{P}},Q_s^{\mbb{P}},\nu_s^{\mbb{P}})(\omega) \in \bigcup_{\alpha \in I} \{(b_{\alpha},Q_{\alpha},\nu_{\alpha})(X_s(\omega))\} \, \, \lambda(ds)\times \mbb{P}-\text{a.s.}\right\}
	\end{equation*}
	be such that  \begin{equation*}
		T_t f(x) := \mc{E}^x f(X_t) :=\sup_{\mbb{P} \in \mathfrak{P}_x} \mbb{E}_{\mbb{P}}f(X_t), \qquad t \geq 0, \, x \in \mbb{R}^k.
	\end{equation*}
	defines a sublinear Markov semigroup on a convex cone $\mc{H}$ of real-valued functions. \begin{enumerate}
		 \item\label{max-5-ii} If $M:=\sup_{n \in \mbb{N}} M_n < \infty$ and $f \in \mc{H}$ is bounded and uniformly continuous, then $t \mapsto T_t f(x)$ is continuous uniformly in $x \in \mbb{R}^d$.
		\item\label{max-5-iii} If $f \in \mc{H}$ is bounded and continuous and \begin{equation*} \lim_{r \to \infty} \sup_{|x| \leq 2r} \sup_{|\xi| \leq r^{-1}} |q_{\alpha}(x,\xi)| = 0 \end{equation*} then $t \mapsto T_t f(x)$ is continuous uniformly in $x \in K$ for any compact set $K \subseteq \mbb{R}^d$.
	\end{enumerate}
\end{thm}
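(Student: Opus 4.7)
The plan is to reduce continuity of $t \mapsto T_t f(x)$ at an arbitrary $t_0 \geq 0$ to continuity at $t=0$ via the semigroup identity, and then combine Proposition~\ref{max-1} with the regularity of $f$. From the semigroup property $T_{t_0+h} = T_{t_0} T_h$, the representation $T_s g(x) = \sup_{\mbb{P} \in \mathfrak{P}_x}\mbb{E}_{\mbb{P}} g(X_s)$, and the elementary inequality $|\sup_{\mbb{P}} a_{\mbb{P}} - \sup_{\mbb{P}} b_{\mbb{P}}| \leq \sup_{\mbb{P}} |a_{\mbb{P}} - b_{\mbb{P}}|$, one obtains
\begin{equation*}
	|T_{t_0+h}f(x) - T_{t_0}f(x)| \leq \sup_{\mbb{P} \in \mathfrak{P}_x}\mbb{E}_{\mbb{P}}\bigl|T_h f(X_{t_0}) - f(X_{t_0})\bigr| = \mc{E}^x\bigl|T_h f(X_{t_0}) - f(X_{t_0})\bigr|,
\end{equation*}
with the case $h \uparrow 0$ handled by writing $T_{t_0} = T_{t_0-h}T_h$ for $h < t_0$. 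It therefore suffices to prove continuity at $t=0$ on suitable sets and then control the right-hand side via the maximal inequality.

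\emph{Step 1: continuity at $t=0$.} Writing
\begin{equation*}
	|T_h f(x) - f(x)| \leq \mc{E}^x|f(X_h) - f(x)| \leq \omega(r) + 2\|f\|_{\infty}\, c h\, K_r(x),
\end{equation*}
with $K_r(x) := \sup_{\alpha \in I}\sup_{|z-x|\leq r}\sup_{|\xi|\leq r^{-1}}|q_\alpha(z,\xi)|$ (using Proposition~\ref{max-1} on $\sup_{\mbb{P}}\mbb{P}(|X_h - x| > r)$), one argues as follows. Under \eqref{max-5-ii}, $f$ has a modulus $\omega$ on all of $\mbb{R}^d$ and $M < \infty$ yields $K_r(x) \leq C(1+r^{-2})$ uniformly in $x$ (by the standard bound $|q_\alpha(z,\xi)| \leq 2(1+|\xi|^2)\sup_{|\eta|\leq 1}|q_\alpha(z,\eta)|$); choose $r$ with $\omega(r) < \eps/2$, then $h$ small, to get continuity at $t=0$ uniformly on $\mbb{R}^d$. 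Under \eqref{max-5-iii}, fix $K$ compact with $K \subseteq B(0,R_0)$ and set $L := \overline{B(0,R_0+1)}$. For $r \in (0,1]$ and $x \in K$ the inclusion $\{|X_h - x| \leq r\} \subseteq \{X_h \in L\}$ bounds the first term by the modulus $\omega_L$ of $f$ on the compact $L$, and $K_r(x) \leq \sup_{\alpha}\sup_{|z|\leq R_0+1}\sup_{|\xi|\leq r^{-1}}|q_\alpha(z,\xi)| < \infty$ by the local boundedness \eqref{main-eq4}. The same two-step choice of $r$ then $h$ gives continuity at $t=0$ uniformly on $K$.

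\emph{Step 2: bootstrap to $t_0 > 0$.} In case \eqref{max-5-ii}, monotonicity and $\mc{E}^x(c) = c$ give $\mc{E}^x|T_h f(X_{t_0}) - f(X_{t_0})| \leq \|T_h f - f\|_\infty$, which tends to $0$ as $h \to 0$ by Step 1, uniformly in $x \in \mbb{R}^d$. In case \eqref{max-5-iii}, split by $\{|X_{t_0} - x| \leq R\}$ to obtain, for $x \in K$ and $R \geq R_0$,
\begin{equation*}
	\mc{E}^x|T_h f(X_{t_0}) - f(X_{t_0})| \leq \sup_{y \in \overline{B(0,R_0+R)}}|T_h f(y) - f(y)| + 2\|f\|_\infty c t_0 \sup_{\alpha}\sup_{|z|\leq 2R}\sup_{|\xi|\leq R^{-1}}|q_\alpha(z,\xi)|,
\end{equation*}
using Proposition~\ref{max-1} on the tail and $R_0 + R \leq 2R$. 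The hypothesis in \eqref{max-5-iii} makes the tail term $<\eps/2$ for $R$ large, and Step 1 applied to the compact $\overline{B(0,R_0+R)}$ makes the first term $<\eps/2$ for $h$ small.

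The main obstacle is the interlocking choice of two radii in case \eqref{max-5-iii}: Step 2 needs a \emph{large} $R$ (made harmless by the decay hypothesis on $q_\alpha$ at infinity), while Step 1 needs a \emph{small} $r$ to exploit the local uniform continuity of $f$ on the resulting enlarged compact. Compatibility rests on the local boundedness assumption \eqref{main-eq4}, which guarantees that $K_r$ stays finite on every compact slab $\{|z|\leq R_0+R\}$ after $R$ has been fixed, so that Step 1 can be applied on arbitrarily large compacts.
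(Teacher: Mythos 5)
Your proof is correct and follows essentially the same route as the paper: the key inequality $|T_{t_0+h}f(x)-T_{t_0}f(x)|\leq \mc{E}^x|T_hf(X_{t_0})-f(X_{t_0})|$ is exactly the paper's estimate \eqref{max-eq21}, and both arguments then combine the maximal inequality of Proposition~\ref{max-1} with the (global, resp.\ local) uniform continuity of $f$, splitting off the event $\{|X_{\min\{s,t\}}-x|>R\}$ in case \eqref{max-5-iii}. Your reorganization into ``continuity at $t=0$ plus bootstrap'' is only a cosmetic repackaging of the paper's single chain of estimates.
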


Let us mention that a sufficient condition for the family $(T_t)_{t \geq 0}$ to be a sublinear Markov semigroup was established by Hollender \cite[Remark 4.33]{julian}.

\begin{proof}[Proof of Theorem~\ref{max-5}]
	Let $f \in \mc{H}$ be a bounded function. Because of the subadditivity of $T_s$ we have \begin{equation*}
		T_s T_{t-s} f = T_s (T_{t-s}f-f+f) \leq T_s(T_{t-s} f- f) + T_s f
	\end{equation*}
	for any $s \leq t$ and by combining this with the Markov property we find that  \begin{align*}
		T_t f(x)-T_s f(x) = T_{s} T_{t-s} f(x) - T_s f(x) \leq T_s(T_{t-s}f-f) = \mc{E}^x \left( E^{X_s} f(X_{t-s}) - f(X_s) \right)
	\end{align*}
	for any $x \in \mbb{R}^d$ and $s \leq t$. In exactly the same fashion we obtain that \begin{align*}
		T_s f(x)-T_t f(x) \leq \mc{E}^x  \left( f(X_{s}) - E^{X_s} f(X_{t-s}) \right).
	\end{align*}
	Interchanging the roles of $s$ and $t$ we conclude that \begin{align}
		|T_t f(x)-T_s f(x)| 
		\leq \mc{E}^x \left( |E^z f(X_{|t-s|}) - f(z)| \bigg|_{z=X_{\min\{s,t\}}} \right)
		\leq \sup_{z \in \mbb{R}^d} E^z |f(X_{|t-s|})-f(z)| \label{max-eq21}
	\end{align}
	for any $s,t \geq 0$ and $x \in \mbb{R}^d$. Applying the maximal inequality \eqref{max-eq15} we get \begin{align*}
		|T_t f(x)-T_s f(x)|
		&\leq \sup_{|u-v| \leq \delta} |f(u)-f(v)| + 2\|f\|_{\infty} \sup_{z \in \mbb{R}^d} P^z \left( \sup_{r \leq |t-s|} |X_r-z|>\delta \right) \\
		&\leq  \sup_{|u-v| \leq \delta} |f(u)-f(v)| + 2c \|f\|_{\infty} |t-s| \sup_{\alpha \in I} \sup_{z \in \mbb{R}^d} \sup_{|\xi| \leq \delta^{-1}} |q_{\alpha}(z,\xi)|
	\end{align*}
	for any $\delta>0$. Since the boundedness of the coefficients implies that the second term on the right-hand side is finite, cf.\ Remark~\ref{max-2}, we infer that $t \mapsto T_t f(x)$ is continuous uniformly in $x \in \mbb{R}^d$ for any bounded function $f \in \mc{H}$ which is uniformly continuous, and this proves \eqref{max-5-ii}. To prove \eqref{max-5-iii} we fix $f \in \mc{H} \cap C_b(\mbb{R}^d)$ and note that, by \eqref{max-eq21}, \begin{align*}
		|T_t f(x)-T_s f(x)| \leq \sup_{|z| \leq R} E^z(|f(X_{|t-s|})-f(z)|) + 2 \|f\|_{\infty} \mc{P}^x \left( \sup_{r \leq \min\{s,t\}} |X_r-x|>R \right)
	\end{align*}
	implying \begin{align*}
		&|T_tf(x)-T_s f(x)| \\
		&\leq \sup_{\substack{|u|,|v| \leq R+\delta \\ |u-v| \leq \delta}} |f(u)-f(v)| + 2\|f\|_{\infty} \sup_{|z| \leq R} P^z \left( \sup_{r \leq |t-s|} |X_r-z|>\delta \right) + 2 \|f\|_{\infty} \mc{P}^x \left( \sup_{r \leq \min\{s,t\}} |X_r-x|>R \right)
	\end{align*}
	for any $\delta>0$. Applying once more the maximal inequality we find that \begin{align*}
		&|T_t f(x)-T_s f(x)| \\
		&\quad \leq \sup_{\substack{|u|,|v| \leq R+\delta \\ |u-v| \leq \delta}} |f(u)-f(v)|+ c |t-s| \sup_{|z| \leq R+\delta} \sup_{|\xi| \leq \delta^{-1}} \sup_{\alpha \in I} |q_{\alpha}(z,\xi)| + c (t+s) \sup_{|z-x| \leq R} \sup_{|\xi| \leq R^{-1}} \sup_{\alpha \in I} |q_{\alpha}(z,\xi)|
	\end{align*}
	for some absolute constant $c>0$. If $K \subseteq \mbb{R}^d$ is a compact set and $\eps>0$ fixed, then the growth assumption on $q_{\alpha}$ entails that we can choose $R>0$ sufficiently large such that the third term on the right hand side is less than $\eps$ for $x \in K$. The uniform continuity of $f$ on compacts implies that the first term on the right-hand side is less than $\eps$ for small $\delta>0$ and finally the second term will be small for $|t-s|<\varrho$ for $\varrho=\varrho(R,\delta)>0$ small enough. This proves the assertion.
\end{proof}

\section{Proofs} \label{p}

In this section we prove the results which we stated in Section~\ref{main} and \ref{ex}. The following lemma is the key ingredient for the proof of Theorem~\ref{main-3}. As usual, we denote by $(X_t)_{t \geq 0}$ the canonical process on the Skorohod space.

\begin{lem} \label{p-1}
	Let $(b_{\alpha}(x),Q_{\alpha}(x),\nu_{\alpha}(x,\cdot))$ and $\mc{E}^x = \sup_{\mbb{P} \in \mathfrak{P}_x} \mbb{E}_{\mbb{P}}$ be as in Theorem~\ref{main-3}. If $f \in C^2_b(\mbb{R}^d)$ is such that $f$ and its derivatives up to order $2$ are uniformly continuous, then \begin{equation*}
		\lim_{t \to 0} \frac{\mc{E}^x f(X_t)-f(x)}{t} = \sup_{\alpha \in I} A^{\alpha} f(x) \fa x \in \mbb{R}^d;
	\end{equation*}
	here $A^{\alpha}f(x) := q_{\alpha}(x,D)f(x)$ denotes the pseudo-differential operator with symbol $q_{\alpha}$, i.\,e.\ \begin{align*}
		A^{\alpha}f(x) &= b_{\alpha}(x) \cdot \nabla f(x) + \frac{1}{2} \tr\left(Q_{\alpha}(x) \nabla^2 f(x)\right) \\ &\quad + \int_{y \neq 0} \big( f(x+y)-f(x)-\nabla f(x) \cdot h(y) \big) \, \nu_{\alpha}(x,dy).
	\end{align*}
\end{lem}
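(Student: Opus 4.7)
The plan is to establish matching upper and lower bounds on $\limsup_{t \downarrow 0}$ and $\liminf_{t \downarrow 0}$ of $t^{-1}(\mc{E}^x f(X_t) - f(x))$, both equal to $G(x) := \sup_{\alpha \in I} A^{\alpha} f(x)$. The two essential ingredients are Dynkin's formula (Lemma~\ref{app-1}), which I will apply to $f$ stopped at the first exit time $\tau := \tau_R^x$ from $\overline{B(x,R)}$ for fixed $R>0$, and the maximal inequality of Proposition~\ref{max-1}, which yields $\sup_{\mbb{P} \in \mathfrak{P}_x} \mbb{P}(\tau \leq t) \leq c_R\, t$ with a constant depending only on the family restricted to $\overline{B(x,R)}$ (finite by \eqref{main-eq4}).

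For the upper bound, fix $\mbb{P} \in \mathfrak{P}_x$. Dynkin's formula gives
\begin{align*}
\mbb{E}_{\mbb{P}} f(X_{t \wedge \tau}) - f(x) = \mbb{E}_{\mbb{P}} \int_0^{t \wedge \tau} A_s^{\mbb{P}} f(X_{s-}) \, ds,
\end{align*}
while the structural assumption \eqref{main-eq7} forces $A_s^{\mbb{P}} f(X_{s-}) \leq G(X_{s-})$ $\lambda(ds) \otimes \mbb{P}$-a.s. The tail contribution is controlled by $|\mbb{E}_{\mbb{P}}[f(X_t) - f(X_{t \wedge \tau})]| \leq 2\|f\|_\infty \mbb{P}(\tau \leq t) = O(t^2)$ uniformly in $\mbb{P}$, so
\begin{align*}
\frac{\mc{E}^x f(X_t) - f(x)}{t} \leq \sup_{\mbb{P} \in \mathfrak{P}_x} \frac{1}{t} \mbb{E}_{\mbb{P}} \int_0^{t \wedge \tau} G(X_{s-}) \, ds + O(t).
\end{align*}
Two facts combine to finish. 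First, $G$ is continuous at $x$: conditions \eqref{C2},\eqref{C3} on $K := \overline{B(x,R)}$, together with the uniform continuity of $\nabla f$ and $\nabla^2 f$, imply (after a standard small/large-$y$ splitting of the jump integral) that $\{A^\alpha f\}_{\alpha \in I}$ is equi-uniformly continuous on $K$, so the supremum $G$ inherits uniform continuity there. Second, $\|G\mathds{1}_K\|_\infty < \infty$ by \eqref{main-eq4}. Given $\eps>0$, choose $r \in (0,R)$ with $|G(z) - G(x)| \leq \eps$ for $|z-x|\leq r$ and split the time integral according to whether $\sup_{s \leq t}|X_s-x|\leq r$; the maximal inequality bounds the probability of the exceptional event by $c_r t$, so the right-hand side is at most $G(x) + \eps + c_r \|G\mathds{1}_K\|_\infty\, t + O(t)$. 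Letting $t \downarrow 0$ and then $\eps \downarrow 0$ yields $\limsup_{t \downarrow 0} t^{-1}(\mc{E}^x f(X_t) - f(x)) \leq G(x)$.

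For the lower bound, condition \eqref{C5} yields, for every $\alpha_0 \in I$, a measure $\mbb{P}^{\alpha_0} \in \mathfrak{P}_x$ whose differential characteristics are $(b_{\alpha_0}(X_{s-}), Q_{\alpha_0}(X_{s-}), \nu_{\alpha_0}(X_{s-},\cdot))$. Applying Dynkin's formula to this particular measure and using the same tail estimate gives
\begin{align*}
\mc{E}^x f(X_t) - f(x) \geq \mbb{E}_{\mbb{P}^{\alpha_0}} f(X_t) - f(x) = \mbb{E}_{\mbb{P}^{\alpha_0}} \int_0^{t \wedge \tau} A^{\alpha_0} f(X_{s-}) \, ds + O(t^2),
\end{align*}
and the single-function version of the continuity argument above yields $\liminf_{t \downarrow 0} t^{-1}(\mc{E}^x f(X_t) - f(x)) \geq A^{\alpha_0} f(x)$. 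Taking the supremum over $\alpha_0 \in I$ matches the upper bound and completes the proof.

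The main obstacle is the uniform-in-$\mbb{P}$ convergence $t^{-1}\mbb{E}_{\mbb{P}} \int_0^{t\wedge\tau} G(X_{s-})\,ds \to G(x)$, which cannot be read off from pointwise Lebesgue differentiation and genuinely requires the maximal inequality to control the exit probability uniformly across the uncertainty subset. The delicate but routine step buried inside this is the verification of equi-uniform continuity on compacts of the jump part $x \mapsto \int (f(x+y)-f(x)-\nabla f(x)\cdot h(y))\,\nu_\alpha(x,dy)$; this is where both \eqref{C3} and the \emph{uniform} continuity of the second derivatives of $f$ play a non-cosmetic role, since the integrand itself depends on $x$ and must be approximated by a fixed $C_b^1$-function with the $\min\{1,|y|^2\}$ bound required to apply \eqref{C3}.
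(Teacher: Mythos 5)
There is a genuine gap, and it sits exactly where the hypotheses \eqref{A1}/\eqref{A2} are supposed to enter. You claim that the tail term satisfies $|\mbb{E}_{\mbb{P}}[f(X_t)-f(X_{t\wedge\tau})]|\leq 2\|f\|_\infty\,\mbb{P}(\tau\leq t)=O(t^2)$ uniformly in $\mbb{P}$. The maximal inequality \eqref{max-eq15} only gives $\sup_{\mbb{P}\in\mathfrak{P}_x}\mbb{P}(\tau_R^x\leq t)\leq c_R\,t$ with $c_R= c\sup_{\alpha}\sup_{|z-x|\leq R}\sup_{|\xi|\leq R^{-1}}|q_\alpha(z,\xi)|$, i.e.\ the exit probability is of order $t$, not $t^2$ (think of a compound Poisson component with jumps larger than $R$: the exit probability by time $t$ is $\asymp t$). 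Consequently, after dividing by $t$ the tail contributes a \emph{constant} error $2\|f\|_\infty c_R$ for fixed $R$, which does not vanish as $t\downarrow 0$; your upper and lower bounds only give $G(x)\pm(\eps+2\|f\|_\infty c_R)$. To remove this error one must either let $R\to\infty$ and invoke the decay condition \eqref{A1} to get $c_R\to 0$ (this is precisely Step 3 of the paper's proof), or, under \eqref{A2}, dispense with the stopping time altogether and apply Dynkin's formula without localization (Lemma~\ref{app-1}\eqref{app-1-ii}), since under \eqref{A2} alone $c_R$ need not tend to $0$. Your proposal never uses \eqref{A1} or \eqref{A2} beyond the local boundedness \eqref{main-eq4}, which is the tell-tale sign that the stated $O(t^2)$ estimate cannot be right: without one of these assumptions the conclusion is not expected to hold.

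A secondary, fixable issue: your route replaces the paper's two-stage freezing (first freeze $f$ and its derivatives at $x$ using only uniform continuity of $f,\nabla f,\nabla^2 f$; then freeze the coefficients at $x$ using \eqref{C2},\eqref{C3} applied to the \emph{fixed} integrand $y\mapsto f(x+y)-f(x)-\nabla f(x)\cdot h(y)$) by a one-shot argument through continuity of $G=\sup_\alpha A^\alpha f$. For the jump part this requires applying \eqref{C3} to a family of integrands $g_{z'}$ depending on the base point $z'$, and \eqref{C3} as stated gives a modulus only for each fixed $g$; you acknowledge this, and it can be repaired by splitting $\int(g_z-g_{z'})\,d\nu_\alpha(z,\cdot)+\int g_{z'}\,d\nu_\alpha(z,\cdot)-\int g_{z'}\,d\nu_\alpha(z',\cdot)$ and a finite-net/compactness argument, but the paper's order of operations avoids the problem entirely. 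So: keep your overall scheme if you like, but correct the tail estimate to $c_R t$, carry the resulting $2\|f\|_\infty c_R$ through both bounds, and conclude by letting $R\to\infty$ under \eqref{A1}, respectively by working without a stopping time under \eqref{A2}.
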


The idea of the proof is similar to \cite[Proof of Theorem 4.37]{julian} but we use the maximal inequality, Proposition~\ref{max-1}, to avoid additional assumptions on the existence of moments.  

\begin{proof}[Proof of Lemma~\ref{p-1}]
	For simplicity of notation we give the proof only in dimension $d=1$. First we are going to prove the assertion under  \eqref{A1}; at the very end of the proof we will discuss how the proof has to be modified to obtain the assertion under the uniform boundedness condition \eqref{A2}. We divide the proof into several steps. \par
	\textbf{Step 1: Show that}  \begin{equation}
			\lim_{t \to 0} \frac{\mc{E}^x f(X_{t \wedge \tau_r^x})-f(x)}{t} = \lim_{t \to 0} \sup_{\mbb{P} \in \mathfrak{P}_x} \frac{1}{t} \mbb{E}_{\mbb{P}} \left( \int_{(0,t \wedge \tau_r^x)} A_s^{\mbb{P}} f(x) \, ds \right) \label{p-eq11}
		\end{equation}
		for any $r>0$ where $\tau_r^x$ denotes the first exit time of the canonical process $(X_t)_{t \geq 0}$ from the closed ball $\overline{B(x,r)}$ and
		\begin{equation*}
			A_s^{\mbb{P}} f(z) := b_s^{\mbb{P}} f'(z) + \frac{1}{2} Q_s^{\mbb{P}}f''(z) + \int_{y \neq 0} \big( f(z+y)-f(z)-f'(z) h(y)\big)  \, \nu_s^{\mbb{P}}(dy). 
		\end{equation*}
		Proof of \eqref{p-eq11}: Since $f$ and its derivatives up to order $2$ are uniformly continuous, we can choose for any $\eps>0$ a constant $\delta>0$ such that \begin{equation}
			|x-y| \leq \delta \implies |f(x)-f(y)| + |f'(x)-f'(y)| + |f''(x)-f''(y)| \leq \eps. \label{p-eq13}
		\end{equation}
		Fix $\mbb{P} \in \mathfrak{P}_x$. Applying Dynkin's formula, Lemma~\ref{app-1}, we find \begin{align}
			\mbb{E}_{\mbb{P}}f(X_{t \wedge \tau_r^x})-f(x) 
			&= \mbb{E}_{\mbb{P}} \int_{(0,t \wedge \tau_r^x)} A_s^{\mbb{P}} f(X_{s-}) \, ds \label{p-eq15} \\
			&= \mbb{E}_{\mbb{P}} \int_{(0,t \wedge \tau_r^x)} b_s^{\mbb{P}} f'(X_{s-}) \, ds  + \frac{1}{2} \mbb{E}_{\mbb{P}} \int_{(0,t \wedge \tau_r^x)} Q_s^{\mbb{P}} f''(X_{s-}) \, ds \notag \\
			&\quad + \mbb{E}_{\mbb{P}} \int_{(0,t \wedge \tau_r^x)} \int_{y \neq 0} \left( f(X_{s-}+z)-f(X_{s-})-f'(X_{s-}) h(z) \right) \, \nu_s^{\mbb{P}}(dz) \, ds. \notag
		\end{align}
		We are going to estimate \begin{align*}
			J_1 := J_1(t,\mbb{P}) &:= \left| \mbb{E}_{\mbb{P}} \int_{(0,t \wedge \tau_r^x)} b_s^{\mbb{P}} f'(X_{s-}) \,ds - \mbb{E}_{\mbb{P}} \int_{(0,t \wedge \tau_r^x)} b_s^{\mbb{P}} f'(x) \, ds \right| \\
			J_2 := J_2(t,\mbb{P}) &:= \left| \mbb{E}_{\mbb{P}} \int_{(0,t \wedge \tau_r^x)} Q_s^{\mbb{P}} f''(X_{s-}) \,ds - \mbb{E}_{\mbb{P}} \int_{(0,t \wedge \tau_r^x)} Q_s^{\mbb{P}} f''(x) \, ds \right| \\ 
			J_3 := J_3(t,\mbb{P}) &:= \bigg|  \mbb{E}_{\mbb{P}} \int_{(0,t \wedge \tau_r^x)}\int_{y \neq 0} \left( f(X_{s-}+y)-f(X_{s-})-f'(X_{s-}) h(y)\right) \,  \nu_s^{\mbb{P}}(dy) \, ds \\
			&\quad -  \mbb{E}_{\mbb{P}} \int_{(0,t \wedge \tau_r^x)} \int_{y \neq 0} \left( f(x+y)-f(x)-f'(x) h(y) \right) \nu_s^{\mbb{P}}(dy) \, ds\bigg|
		\end{align*}
		for fixed $r>0$, $x \in \mbb{R}^d$. It follows from \eqref{p-eq13}, \eqref{main-eq4} and \eqref{main-eq7} that \begin{align*}
			J_1
			= \mbb{E}_{\mbb{P}} \left| \int_{(0,t \wedge \tau_r^x)} b_s^{\mbb{P}} (f'(X_{s-})-f'(x)) \, ds \right| 
			&\leq  \sup_{\alpha \in A} \sup_{|z-x| \leq r} |b_{\alpha}(z)|  \left[ \eps t + 2t \|f'\|_{\infty} \mbb{P} \left( \sup_{s \leq t} |X_s-x|>\delta \right) \right] \\
			&\leq C_1 t \left[ \eps + \mc{P}^x \left( \sup_{s \leq t} |X_s-x|>\delta \right) \right]
		\end{align*}
		for some constant $C_1=C_1(r,x)<\infty$ which does not depend on $t$ and $\mbb{P} \in \mathfrak{P}_x$. In exactly the same way it follows from the uniform continuity of $f''$, \eqref{main-eq4} and \eqref{main-eq7} that \begin{equation*}
			J_2 \leq C_2 t \left[ \eps + \mc{P}^x \left( \sup_{s \leq t} |X_s-x|>\delta \right) \right]
		\end{equation*}
		for some constant $C_2 = C_2(r,x)$. In order to estimate $J_3$ we note that \begin{align*}
			\Delta := & \left| \big(f(X_{s-}+y)-f(X_{s-})-f'(X_{s-}) y \big) - \big( f(x+y)-f(x)-f'(x)y \big) \right| \\
			&= \left| y \int_0^1 (f'(X_{s-}+\eta y)-f'(X_{s-})) \, d\eta - y \int_0^1 (f'(x+\eta y)-f'(x)) \, d\eta \right| \\
			&= |y|^2 \left| \int_0^1 \int_0^1 \eta f''(X_{s-}+\kappa \eta y) \, d\kappa \, d\eta - \int_0^1 \int_0^1  \eta f''(x+\kappa \eta y) \, d\kappa \, d\eta \right|
		\end{align*}
		and therefore the uniform continuity \eqref{p-eq13} gives \begin{align*}
			\Delta \leq |y|^2 \left(\eps \I_{\{|X_{s-}-x| \leq \delta\}} + 2\|f''\|_{\infty} \I_{\{|X_{s-}-x|>\delta\}} \right).
		\end{align*}
		On the other hand, the uniform continuity of $f$ entails that \begin{align*}
			\left| \big( f(X_{s-}+y)-f(X_{s-})\big)- \big( f(x+y)-f(x) \big) \right|
			&\leq |f(X_{s-}+y)-f(x+y)| + |f(X_{s-})-f(x)| \\
			&\leq 2\eps \I_{\{||X_{s-}-x| \leq \delta\}} + 4\|f\|_{\infty} \I_{\{|X_{s-}-x|>\delta\}}.
		\end{align*}
		Since the truncation function $h$ is bounded, has bounded support and satisfies $h(y)=y$ in a neighborhood of $0$, it follows from the above estimates that \begin{align*}
			 &\left| \big(f(X_{s-}+y)-f(X_{s-})-f'(X_{s-}) h(y) \big) - \big( f(x+y)-f(x)-f'(x)h(y) \big) \right| \\
			 &\leq C_3 \left(\I_{\{|X_{s-}-x| \leq \delta\}} \eps + \I_{\{|X_{s-}-x|>\delta\}} \|f\|_{(2)} \right) \min\{1,|y|^2\}
		\end{align*}
		for some constant $C_3>0$. Combining this with \eqref{main-eq7} we find that \begin{align*}
			J_3
			&\leq C_3 \eps \mbb{E}_{\mbb{P}} \left( \int_{(0,t \wedge \tau_r^x)} \!\! \int_{y \neq 0} \min\{1,|y|^2\} \,  \nu_s^{\mbb{P}}(dy) \, ds\right)\\ &\quad + C_3 \|f\|_{(2)} \mbb{E}_{\mbb{P}} \left( \I_{\{\sup_{s \leq t} |X_s-x|>\delta\}} \int_{(0,t \wedge \tau_r^x)} \int_{y \neq 0} \min\{1,|y|^2\} \,  \nu_s^{\mbb{P}}(dy) \, ds \right) \\
			&\leq C_3' t \sup_{\alpha \in I} \sup_{|z-x| \leq r} \int_{y \neq 0} \min\{1,|y|^2\} \, \nu_{\alpha}(z,dy) \left( \eps + \mbb{P} \left[ \sup_{s \leq t} |X_s-x|>\delta \right] \right) \\
			&\leq C_3'' t \left( \eps + \mc{P}^x \left[ \sup_{s \leq t} |X_s-x|>\delta \right] \right)
		\end{align*}
		for suitable absolute constants $C_3',C_3''>0$. The maximal inequality, Proposition~\ref{max-1}, shows that $\mc{P}^x(\sup_{s \leq t} |X_s-x|>\delta)=O(t)$ as $t \to 0$, and therefore we conclude that \begin{align*}
			&\lim_{t \to 0} \sup_{\mbb{P} \in \mathfrak{P}_x} \frac{1}{t} \left| \mbb{E}_{\mbb{P}} \int_{(0,t \wedge \tau_r^x)} A_s^{\mbb{P}} f(X_{s-}) \, ds - \mbb{E}_{\mbb{P}} \int_{(0,t \wedge \tau_r^x)} A_s^{\mbb{P}} f(x) \, ds \right| \\
			&\leq \limsup_{t \to 0} \sup_{\mbb{P} \in \mathfrak{P}_x} \frac{1}{t} (J_1(t,\mbb{P})+J_2(t,\mbb{P})+J_3(t,\mbb{P}))
			\leq (C_1+C_2+C_3'') \eps \xrightarrow[]{\eps \to 0} 0. 
		\end{align*}
		It now follows from \eqref{p-eq15} that \begin{align*}
			\lim_{t \to 0} \frac{\mc{E}^x f(X_{t \wedge \tau_r^x})-f(x)}{t}
			= \lim_{t \to 0} \sup_{\mbb{P} \in \mathfrak{P}_x} \frac{\mbb{E}_{\mbb{P}}f(X_{t \wedge \tau_r^x})-f(x)}{t} 
			&= \lim_{t \to 0} \sup_{\mbb{P} \in \mathfrak{P}_x} \frac{1}{t} \mbb{E}_{\mbb{P}} \left( \int_{(0,t \wedge \tau_r^x)} A_s^{\mbb{P}} f(X_{s-}) \, ds \right) \\
			&= \lim_{t \to 0} \sup_{\mbb{P} \in \mathfrak{P}_x} \frac{1}{t} \mbb{E}_{\mbb{P}} \left( \int_{(0,t \wedge \tau_r^x)} A_s^{\mbb{P}} f(x) \, ds \right).
		\end{align*}
		\textbf{Step 2: Show that} \begin{equation}
			\lim_{t \to 0} \sup_{\mbb{P} \in \mathfrak{P}_x} \frac{1}{t} \mbb{E}_{\mbb{P}} \left( \int_{(0,t \wedge \tau_r^x)} A_s^{\mbb{P}} f(x) \, ds \right) = \sup_{\alpha \in I} A^{\alpha} f(x). \label{p-eq21}
		\end{equation}
		Proof of \eqref{p-eq21}: Fix $\mbb{P} \in \mathfrak{P}_x$. Because of \eqref{main-eq7} we have \begin{align*}
			\mbb{E}_{\mbb{P}} \int_{(0,t \wedge \tau_r^x)} A_s^{\mbb{P}} f(x) \, ds 
			&\leq \sup_{\alpha \in I} \mbb{E}_{\mbb{P}} \bigg( \int_{(0,t \wedge \tau_r^x)} b_{\alpha}(X_{s})f'(x) \, ds + \frac{1}{2} \int_{(0,t \wedge \tau_r^x)} Q_{\alpha}(X_{s}) f''(x) \, ds \\
			&\qquad + \int_{(0,t \wedge \tau_r^x)} \int_{y \neq 0} \left( f(x+y)-f(x)-f'(x) h(y) \right) \, \nu_{\alpha}(X_{s},dy) \bigg).
		\end{align*}
		Using \eqref{A1}, \eqref{C2} and \eqref{C3} we find that for any $\eps>0$ there exist constants $\delta>0$ and $c_1>0$ such that \begin{align*}
			\mbb{E}_{\mbb{P}} \int_{(0,t \wedge \tau_r^x)} A_s^{\mbb{P}} f(x) \, ds
			&\leq \sup_{\alpha \in I} \mbb{E}_{\mbb{P}} \int_{(0,t \wedge \tau_r^x)} A^{\alpha} f(x) \, ds + t \eps + c_1 M t \|f\|_{(2)}  \mbb{P} \left(\sup_{s \leq t} |X_s-x|>\delta \right)
		\end{align*}
		for \begin{equation*}
			M = M(r,x) =  \sup_{\alpha \in I} \sup_{|z-x| \leq r} \left( |b_{\alpha}(z)| + |Q_{\alpha}(z)| + \int_{y \neq 0} \min\{1,|y|^2\} \, \nu_{\alpha}(z,dy) \right).
		\end{equation*}
		Applying the maximal inequality \eqref{max-eq15} and the dominated convergence theorem we conclude that \begin{align}
			 \limsup_{t \to 0} \sup_{\mbb{P} \in \mathfrak{P}_x} \frac{1}{t} \mbb{E}_{\mbb{P}} \left( \int_{(0,t \wedge \tau_r^x)} A_s^{\mbb{P}} f(x) \, ds \right)
			 &\leq \sup_{\alpha \in I} A^{\alpha} f(x) + \eps
			 \xrightarrow[]{\eps \to 0} \sup_{\alpha \in I} A^{\alpha} f(x). \label{p-eq25}
		\end{align}
		On the other hand \eqref{C5} gives for any $\alpha \in I$ \begin{align*}
			 \sup_{\mbb{P} \in \mathfrak{P}_x} \frac{1}{t} \mbb{E}_{\mbb{P}} \left( \int_{(0,t \wedge \tau_r^x)} A_s^{\mbb{P}} f(x) \, ds \right)
			 &\geq \frac{1}{t} \mbb{E}_{\mbb{P}^{\alpha}} \left( \int_{(0,t \wedge \tau_r^x)} A_s^{\mbb{P}_{\alpha}} f(x) \, ds \right) \\
			 &= \mbb{E}_{\mbb{P}_{\alpha}} \bigg( \int_{(0,t \wedge \tau_r^x)} b_{\alpha}(X_{s-})f'(x) \, ds + \frac{1}{2} \int_{(0,t \wedge \tau_r^x)} Q_{\alpha}(X_{s-}) f''(x) \, ds \\
			 			&\qquad + \int_{(0,t \wedge \tau_r^x)} \int_{y \neq 0} \left( f(x+y)-f(x)-f'(x) h(y) \right) \, \nu_{\alpha}(X_{s-},dy) \bigg).
		\end{align*}
		Invoking once more the uniform equicontinuity and the boundedness of the coefficients on compacts we can choose for any $\eps>0$ some constants $\delta>0$, $c_2>0$ such that \begin{align*}
			 \sup_{\mbb{P} \in \mathfrak{P}_x} \frac{1}{t} \mbb{E}_{\mbb{P}} \left( \int_{(0,t \wedge \tau_r^x)} A_s^{\mbb{P}} f(x) \, ds \right)
			 &\geq \mbb{E}_{\mbb{P}^{\alpha}} \int_{(0,t \wedge \tau_r^x)} A^{\alpha} f(x) \, ds - t \eps - c_2 M t \|f\|_{(2)}  \mbb{P}^{\alpha} \left( \sup_{s \leq t} |X_s-x|>\delta \right),
		\end{align*}
		and the maximal inequality \eqref{max-eq15} gives \begin{equation*}
			\liminf_{t \to 0}\sup_{\mbb{P} \in \mathfrak{P}_x} \frac{1}{t} \mbb{E}_{\mbb{P}} \left( \int_{(0,t \wedge \tau_r^x)} A_s^{\mbb{P}} f(x) \, ds \right)
			\geq A^{\alpha} f(x)- \eps \xrightarrow[]{\eps \to 0} A^{\alpha} f(x).
		\end{equation*}
		As $\alpha \in I$ is arbitrary this proves that \begin{equation}
			\liminf_{t \to 0} \sup_{\mbb{P} \in \mathfrak{P}_x} \frac{1}{t} \mbb{E}_{\mbb{P}} \left( \int_{(0,t \wedge \tau_r^x)} A_s^{\mbb{P}} f(x) \, ds \right)
			\geq \sup_{\alpha \in I} A^{\alpha} f(x). \label{p-eq29}
		\end{equation}
		Combining \eqref{p-eq25} and \eqref{p-eq29} gives \eqref{p-eq21}. \par
		\textbf{Step 3: Show that there exists $c_r = c_r(x)>0$ such that $c_r \to 0$ as $r \to \infty$ and} \begin{equation}
			 \sup_{\mbb{P} \in \mathfrak{P}_x} \left| \frac{\mbb{E}_{\mbb{P}} f(X_t)-\mbb{E}_{\mbb{P}}f(X_{t \wedge \tau_r^x})}{t} \right| \leq c_r t \fa  t \geq 0,\,  r>0. \label{p-eq30}
		\end{equation}
		Proof of \eqref{p-eq30}: As $f(X_t)-f(X_{t \wedge \tau_r^x})=0$ on $\{\tau_r^x \geq t\}$ we have \begin{align*}
			|\mbb{E}_{\mbb{P}} f(X_t)-\mbb{E}_{\mbb{P}}f(X_{t \wedge \tau_r^x})|
			&\leq 2\|f\|_{\infty} \mbb{P} \left( \sup_{s \leq t} |X_s-x|>r \right).
		\end{align*}
		Applying the maximal inequality we obtain that \begin{equation*}
			|\mbb{E}_{\mbb{P}} f(X_t)-\mbb{E}_{\mbb{P}}f(X_{t \wedge \tau_r^x})|
			\leq c_r t 
		\end{equation*}
		for \begin{equation*}
			c_r := 2 \|f\|_{\infty} c \sup_{|z-x| \leq r} \sup_{|\xi| \leq r^{-1}} \sup_{\alpha \in I} |q_{\alpha}(z,\xi)|
		\end{equation*}
		where $c>0$ is some absolute constant; by \eqref{A1}, $c_r \to 0$ as $r \to \infty$. \par
		\textbf{Step 4: Conclusion.} Using the elementary estimate \begin{equation*}
			\sup_{\mbb{P}} a_{\mbb{P}}  - \sup_{\mbb{P}} |b_{\mbb{P}}| \leq \sup_{\mbb{P}} (a_{\mbb{P}}+b_{\mbb{P}}) \leq \sup_{\mbb{P}} a_{\mbb{P}} + \sup_{\mbb{P}} |b_{\mbb{P}}|
		\end{equation*}
		and the fact that \begin{align*}
			\frac{\mc{E}^x f(X_t)-f(x)}{t}&  - \sup_{\alpha \in I} A^{\alpha} f(x) \\
			&= \sup_{\mbb{P} \in \mathfrak{P}_x} \left( \left[ \frac{\mbb{E}_{\mbb{P}} f(X_{t \wedge \tau_r^x})-f(x)}{t} - \sup_{\alpha \in I} A^{\alpha} f(x) \right] + \frac{\mbb{E}_{\mbb{P}} f(X_t)-\mbb{E}_{\mbb{P}} f(X_{t \wedge \tau_r^x})}{t} \right)
		\end{align*}
		it follows easily from Step 1-3 that \begin{equation*}
			\lim_{t \to 0} \frac{\mc{E}^x f(X_t)-f(x)}{t} = \sup_{\alpha \in I} A^{\alpha} f(x) \fa  x \in \mbb{R}^d;
		\end{equation*}
		this proves the assertion for the case that the family of L\'evy triplets satisfies \eqref{A1}. If the family of L\'evy triplets is uniformly bounded, i.\,e.\ if it satisfies \eqref{A2}, we replace in the above reasoning the stopped process $(X_{t \wedge \tau_r^x})_{t \geq 0}$ by the original process $(X_t)_{t \geq 0}$; formally this corresponds to setting $r:= \infty$ (note that the sample paths of $(X_t)_{t \geq 0}$ do not explode in finite time). In particular, Step 3 can be omitted since the expression on the left-hand side of \eqref{p-eq30} equals zero. 
\end{proof}

We are now ready to prove our main result, Theorem~\ref{main-3}.

\begin{proof}[Proof of Theorem~\ref{main-3}]
	Let $f \in \mc{H}$ be a function such that $u(t,x) := T_t f(x)$ depends continuously on $(t,x)$. If we denote by $A$ the generator of the sublinear Markov semigroup $(T_t)_{t \geq 0}$, then we find by applying Theorem~\ref{pre-33} that $u$ is a viscosity solution (in the sense of Definition~\ref{pre-21}) to the evolution equation \begin{equation*}
		\partial_t u(t,x) - A_x u(t,x) = 0 \qquad u(0,x)=f(x).
	\end{equation*}
	Since Lemma~\ref{p-1} shows that \begin{equation*}
		A \varphi(x) = \sup_{\alpha \in I} q_{\alpha}(x,D) \varphi(x)
	\end{equation*}
	for any $\varphi \in C_b^{\infty}(\mbb{R}^d)$, we conclude that $u$ is a viscosity solution to \begin{equation*}
		\partial_t u(t,x) - \sup_{\alpha \in I} q_{\alpha}(x,D) u(t,x) = 0. \qedhere
	\end{equation*}
\end{proof}

\begin{proof}[Proof of Proposition~\ref{main-6}]
	It was shown in \cite{julian} that $(T_t)_{t \geq 0}$ is a sublinear Markov semigroup on the space of bounded upper semi-analytic functions and that $(T_t)_{t \geq 0}$ is spatially homogeneous, i.\,e.\ \begin{equation*}
		T_t f(x) = \mc{E}^x f(X_t) = \mc{E}^0 f(x+X_t), \qquad t \geq 0, \, x \in \mbb{R}^d 
	\end{equation*}
	for any bounded upper semi-analytic function $f$. In order to show that $(T_t)_{t \geq 0}$ is a sublinear Markov semigroup on the spaces \eqref{main-6-ii}-\eqref{main-6-iv}, it therefore suffices to prove that the semigroup leaves each of the spaces invariant. The spatial homogeneity of the semigroup gives  \begin{align*}
		T_t f(x)-T_t f(y) = \mc{E}^0 f(x+X_t)- \mc{E}^0 f(y+X_t) &\leq \mc{E}^0 \big( f(x+X_t)-f(y+X_t) \big) \\ &\leq \mc{E}^0 |f(x+X_t)-f(y+X_t)| 
	\end{align*}
	where we have used for the first inequality the subaddivity of $\mc{P}^0$. Interchanging the roles of $x$ and $y$ we obtain that \begin{align}
		|T_t f(x)-T_t f(y)| \leq \mc{E}^0|f(x+X_t)-f(y+X_t)|, \label{p-eq33}
	\end{align}
	and this shows that $x \mapsto T_t f(x)$ is Lipschitz continuous (resp.\ uniformly continuous) if $f$ is Lipschitz continuous (resp.\ Lipschitz continuous). Now assume additionally that \eqref{main-eq10} holds; to finish the proof we have to show that $T_t f$ is continuous for any $f \in C_b(\mbb{R}^d)$. Fix $f \in C_b(\mbb{R}^d)$, $\eps>0$ and $x,y \in B(0,k)$. It follows from \eqref{p-eq33} that \begin{align*}
		|T_t f(x)-T_t f(y)|
		\leq \sup_{\substack{|u-v| \leq |x-y| \\ u,v \in B(0,k+R)}} |f(u)-f(v)| + 2 \|f\|_{\infty} \mc{P}^0 \left( \sup_{s \leq t} |X_s| >R \right),
	\end{align*}
	and therefore the maximal inequality \eqref{max-eq15} gives \begin{align}
		|T_t f(x)-T_t f(y)|
		\leq \sup_{\substack{|u-v| \leq |x-y| \\ u,v \in B(0,k+R)}} |f(u)-f(v)| + ct \|f\|_{\infty} \sup_{\alpha \in I} \sup_{|\xi| \leq R^{-1}} |\psi_{\alpha}(\xi)| \label{p-eq34}
	\end{align}
	for some absolute constant $c>0$ and \begin{equation*}
		\psi_{\alpha}(\xi) = -ib_{\alpha} \cdot \xi + \frac{1}{2} \xi \cdot Q_{\alpha} \xi + \int_{y \neq 0} \left( 1-e^{iy \cdot \xi} + i\xi \cdot h(y) \right) \, \nu_{\alpha}(dy).
	\end{equation*}
	Because of the tightness condition \eqref{main-eq10}, we can choose $R>0$ sufficiently large the second term on the right-hand side of \eqref{p-eq34} is smaller than $\eps$, cf.\ Lemma~\ref{app-3}. Since $f$ is uniformly continuous on $B(0,k+R)$ the first term on the right-hand side of \eqref{p-eq34} is smaller than $\eps$ for $|x-y| \leq \delta$ small, and this proves the continuity of $x \mapsto T_t f(x)$.
\end{proof}

\begin{proof}[Proof of Corollary~\ref{main-7}] 
	To prove that $u(t,x) = T_t f(x)$ is a solution, we are going to apply Theorem~\ref{main-3}. The uniform boundedness condition \eqref{A2} holds by assumption whereas \eqref{C2},\eqref{C3} are trivially satisfied. Condition \eqref{C4} follows from the Proposition~\ref{main-6}, and \eqref{C5} is clearly satisfied since we can choose $\mbb{P}^{\alpha} \in \mathfrak{P}_x$ such that the canonical process $(X_t)_{t \geq 0}$ is a (classical) L\'evy process with characteristic exponent $\psi_{\alpha}$ on the Skorohod space endowed with the probability meausre $\mbb{P}^{\alpha}$. In order to apply Theorem~\ref{main-3}, we finally note that $(t,x) \mapsto T_t f(x)$ is continuous if \eqref{main-7-i} or \eqref{main-7-ii} holds; this follows readily from Proposition~\ref{main-6} and Theorem~\ref{max-5}. \par
	If additionally the tightness condition \eqref{main-eq17} is satisfied, then \cite[Corollary 2.34]{julian} shows that the solution is unique.
\end{proof}

\begin{proof}[Proof of Corollary~\ref{main-11}] 
	We are going to apply Theorem~\ref{main-3} to prove the assertion. It was shown in \cite[Proposition 4.60]{julian} that \begin{equation*}
		P_t f(x) = \mc{E}^0 f(Z_t^x) = \sup_{\mbb{P} \in \mathfrak{P}_0} \mbb{E}_{\mbb{P}} f(Z_t^x), \qquad t \geq 0, \, x \in \mbb{R}^d,
	\end{equation*}
	defines a sublinear Markov semigroup on the space of bounded upper semi-analytic functions. Fix a Lipschitz continuous truncation function $\hat{h}: \mbb{R}^k \to \mbb{R}^k$. For each $\mbb{P} \in \mathfrak{P}_0$ the process $(Z_t^x)_{t \geq 0}$ is a $\mbb{P}$-semimartingale with differential characteristics $(b^{\mbb{P}}_s, Q_s^{\mbb{P}},\nu_s^{\mbb{P}})$ with respect to the truncation function $\hat{h}$ satisfying \begin{equation*}
		(b_s^{\mbb{P}},Q_s^{\mbb{P}},\nu_s^{\mbb{P}}) \in \bigcup_{\alpha \in I} \{(\hat{b}_{\alpha}(Z_s),\hat{Q}_{\alpha}(Z_s),\hat{\nu}_{\alpha}(Z_s))\} \quad \lambda(ds)\times\mbb{P}\text{-a.s.}
	\end{equation*}
	for the uncertainty coefficients \begin{align*} 
		\hat{b}_{\alpha}(x) &:= \sigma(x) \cdot b_{\alpha} - \int_{y \neq 0} \left(\sigma(x) \cdot h(y)-\hat{h}(\sigma(x) \cdot y)\right) \, \nu_{\alpha}(dy) \\
		\hat{Q}_{\alpha}(x) &:= \sigma(x) Q_{\alpha} \sigma(x)^T \\
		\hat{\nu}_{\alpha}(x,B) &:= \int_{y \neq 0} \I_B(\sigma(x) \cdot y) \, \nu_{\alpha}(dy),
	\end{align*}
	see e.\,g.\ \cite[Proposition 9.5.3]{jacod}; recall that $h$ is the truncation function associated with the driving L\'evy process for sublinear expectations. Consequently, the push-forward $\hat{\mc{P}}_x := \mathfrak{P}_0 \circ (Z^x)^{-1}$ satisfies \begin{equation*}
		\hat{\mc{P}}_x \subseteq \left\{ \hat{\mbb{P}} \in \mathfrak{P}_{\sem}^{\ac}(D_x); (b_s^{\hat{\mbb{P}}},Q_s^{\hat{\mbb{P}}},\nu_s^{\hat{\mbb{P}}})(\omega) \in \bigcup_{\alpha \in I} \{(\hat{b}_{\alpha},\hat{Q}_{\alpha},\hat{\nu}_{\alpha})(\omega(s))\} \, \, \lambda(ds)\times \hat{\mbb{P}}(d\omega)-\text{a.s.}\right\};
	\end{equation*}
	this proves that assumption \eqref{C4} of Theorem~\ref{main-3} is satisfied. If $\mbb{P}_{\alpha} \in \mathfrak{P}_0$ is such that $(X_t)_{t \geq 0}$ is a classical L\'evy process with L\'evy triplet $(b_{\alpha},Q_{\alpha},\nu_{\alpha})$ with respect to $\mbb{P}_{\alpha}$, then the differential characteristics associated to the measure $\hat{\mbb{P}}_{\alpha} := \mbb{P}_{\alpha} \circ (Z^x)^{-1} \in \hat{\mc{P}}_x$ is given by \begin{equation*}
		(b_s^{\hat{\mbb{P}}_{\alpha}},Q_s^{\hat{\mbb{P}}_{\alpha}},\nu_s^{\hat{\mbb{P}}_{\alpha}})(\omega) = (\hat{b}_{\alpha}(\omega(s)),\hat{Q}_{\alpha}(\omega(s)),\hat{\nu}_{\alpha}(\omega(s)))
	\end{equation*}
	$\lambda(ds)\times\hat{\mbb{P}}_{\alpha}(\omega)$-almost surely, and this gives \eqref{C5}. The continuity assumptions \eqref{C2} and \eqref{C3} are a consequence of the tightness condition \eqref{main-eq17} and the Lipschitz continuity of $\sigma$ and $\hat{h}$. %beweis von c3 \"{a}hnlich wie beweis von app-3  
	Moreover, we clearly have \begin{equation*}
		\sup_{\alpha \in I} \sup_{x \in K} \left( |\hat{b}_{\alpha}(x)| + |\hat{Q}_{\alpha}(x)| + \int_{y \neq 0} \min\{1,|y|^2\} \, \hat{\nu}_{\alpha}(x,dy) \right) < \infty
	\end{equation*}
	for any compact set $K \subseteq \mbb{R}^k$.  The continuous negative definite function $\hat{q}_{\alpha}(x,\cdot)$ associated with $(\hat{b}_{\alpha}(x),\hat{Q}_{\alpha}(x),\hat{\nu}_{\alpha}(x,dy))$ via the L\'evy--Khintchine formula equals  \begin{equation*}
		\hat{q}_{\alpha}(x,\xi) = \psi_{\alpha}(\sigma(x)^T \xi), \qquad \alpha \in I, \, x,\xi \in \mbb{R}^k,
	\end{equation*}
	cf.\ \cite{schnurr}, and so the sublinear growth condition on $\sigma$ entails that \begin{align*}
		\sup_{\alpha \in I} \sup_{|x| \leq 2r} \sup_{|\xi| \leq r^{-1}} |\hat{q}_{\alpha}(x,\xi)|
		&\leq \sup_{\alpha \in I} \sup_{|\eta| \leq c r^{-\eps}} |\psi_{\alpha}(\eta)|
	\end{align*}
	for suitable constants $c>0$ and $\eps \in (0,1]$. By the tightness and uniform boundedness of the family $(b_{\alpha},Q_{\alpha},\nu_{\alpha})$, $\alpha \in I$, the right-hand side converges to $0$ as $r \to \infty$, see Lemma~\ref{app-3} for details. Consequently, we have shown that condition \eqref{A1} holds. \par
	Now let $f \in C_b(\mbb{R}^k)$. In order to apply Theorem~\ref{main-3}, it remains to prove that \begin{equation*}
		(t,x) \mapsto P_t f(x) = \mc{E}^0 f(Z_t^x)
	\end{equation*}
	is continuous. To this end, we note that Theorem~\ref{max-5}\eqref{max-5-iii} yields that $t \mapsto P_t f(x)$, $x \in K$, is equi-continuous for any compact set $K \subseteq \mbb{R}^k$. On the other hand, \cite[Proposition 4.61]{julian} shows that $x \mapsto P_t f(x)$ is continuous for each fixed $t \geq 0$. Combining both facts we conclude that $(t,x) \mapsto P_t f(x)$ is continuous. \par
	An application of Theorem~\ref{main-3} now shows that $u(t,x) = P_t f(x)$ is a viscosity solution to \eqref{main-eq25}. The uniqueness follows from \cite[Corollary 2.34]{julian}.
\end{proof}

\begin{proof}[Proof of Corollary~\ref{main-13}]
	We apply Theorem~\ref{main-3} for $\mathfrak{P}_x := \{\mbb{P}^x\}$ and the family of L\'evy triplets $(b(x),Q(x),\nu(x,dy))$. First of all, we note that \cite[Lemma 6.2]{schnurr} (see also \cite[Theorem 2.31]{ltp}) gives \begin{equation*}
		\sup_{x \in K} \left( |b(x)| + |Q(x)| + \int_{y \neq 0} \min\{1,|y|^2\} \, \nu(x,dy) \right) < \infty
	\end{equation*}
	for any compact set $K \subseteq \mbb{R}^d$. The assumptions of Corollary~\ref{main-13} are tailored in such a way that one of the  conditions \eqref{A1},\eqref{A2} is satisfied. Moreover, the (classical) Markov semigroup $T_t f(x) = \mbb{E}^x f(X_t)$ is clearly a sublinear Markov semigroup on the Borel measurable bounded functions, and so \eqref{C3} holds for $\mc{H} := \mc{B}_b(\mbb{R}^d)$. Condition \eqref{C4} is automatically satisfied since the Feller process $(X_t)_{t \geq 0}$ is a semimartingale with differential characteristics $(b(X_{s-}), Q(X_{s-}), \nu(X_{s-},dy))$. Applying Theorem~\ref{main-3} we find that $u(t,x) := T_t f(x)$ is a viscosity solution to \eqref{main-eq31} for any $f \in \mc{B}_b(\mbb{R}^d)$ such that $(t,x) \mapsto u(t,x)$ is continuous. We consider the two cases separately: \begin{enumerate}
		\item $f \in C_{\infty}(\mbb{R}^d)$ and $q$ has bounded coefficients:  The Feller property gives that $x \mapsto T_t f(x)$ is continuous. On the other hand, Theorem~\ref{max-5}\eqref{max-5-ii} shows that $t \mapsto T_t f(x)$ is continuous uniformly in $x \in \mbb{R}^d$. Therefore we infer that $(t,x) \mapsto T_t f(x)$ is continuous for any $f \in C_{\infty}(\mbb{R}^d)$, and combining this with the first part of the proof this proves that $u$ is a viscosity solution to \eqref{main-eq31}. 
		\item $f \in C_b(\mbb{R}^d)$ and the uniform continuity condition \eqref{main-eq29} holds: \cite[Theorem 5.5]{cons}, see also \cite[Lemma 3.3]{perpetual}, shows that $(X_t)_{t \geq 0}$ is conservative, i.\,e.\ $T_t 1=1$ for any $t \geq 0$. Since \begin{equation*}
			T_t(C_{\infty}(\mbb{R}^d)) \subseteq C_{\infty}(\mbb{R}^d), T_t(1) \in C_b(\mbb{R}^d) \implies T_t(C_b(\mbb{R}^d)) \subseteq C_b(\mbb{R}^d),
		\end{equation*}
		cf.\ \cite[Section 3]{cons} or \cite[Theorem 1.9]{ltp}, we find that $x \mapsto T_t f(x)$ is continuous for all $t \geq 0$. Moreover, Theorem~\ref{max-5}\eqref{max-5-iii} gives that $t \mapsto T_t f(x)$ is continuous uniformly in $x \in K$ for any compact set $K \subseteq \mbb{R}^d$. Consequently, we obtain that $(t,x) \mapsto T_t f(x)$ is continuous, and by the first part of the proof $u(t,x)=T_t f(x)$ is a viscosity solution to \eqref{main-eq31}. \qedhere
	\end{enumerate}
\end{proof}

\appendix

\section{} \label{app}

\begin{lem}[Dynkin formula] \label{app-1} 
Let $(X_t)_{t \geq 0}$ be an $(\mc{F}_t)_{t \geq 0}$-adapted \cadlag semimartingale on a (classical) probability space $(\Omega,\mc{A},\mbb{P})$ started at $X_0 = x \in \mbb{R}^d$. Assume that $(X_t)_{t \geq 0}$ has differential characteristics $(b_s,Q_s,\nu_s)$ satisfying \begin{equation}
	(b_s,Q_s,\nu_s)(\omega) \in \bigcup_{\alpha \in I} \{(b_{\alpha},Q_{\alpha},\nu_{\alpha})(X_s(\omega))\} \, \, \lambda(ds)\times \mbb{P}(\omega)\text{-a.s.} \label{app-eq3}
\end{equation}
for a family of L\'evy triplets $(b_{\alpha}(z),Q_{\alpha}(z),\nu_{\alpha}(z))$, $\alpha \in I$, $z \in \mbb{R}^d$. \begin{enumerate}
	\item\label{app-1-i} If \begin{equation*}
			K_n := \sup_{\alpha \in I} \sup_{|z-x| \leq n} \left( |b_{\alpha}(z)|+ |Q_{\alpha}(z)| + \int_{y \neq 0} \min\{1,|y|^2\} \, \nu_{\alpha}(z,dy) \right)<\infty
	\end{equation*}
	for all $n \in \mbb{N}$, then \begin{equation*}
		\mbb{E}f(X_{t \wedge \tau_r}) - f(x) = \mbb{E} \left( \int_{(0,t \wedge \tau_r)} A_s f(X_{s-}) \, ds \right)
	\end{equation*}
	for any $r>0$ and $f \in C_b^2(\mbb{R}^d)$; here $\tau_r$ denotes the first exit time from $\overline{B(x,r)}$ and 
	\begin{align*}
		A_s f(z) := b_s \cdot \nabla f(z) + \frac{1}{2} \tr(Q_s \nabla^2 f(z)) + \int_{y \neq 0} \big( f(z+y)-f(z)-\nabla f(z) \cdot  h(y) \big)  \, \nu_s(dy). 
	\end{align*}
	\item\label{app-1-ii} If $K := \sup_{n \in \mbb{N}} K_n < \infty$, then \begin{equation*}
		\mbb{E}f(X_t) - f(x) = \mbb{E} \left( \int_{(0,t)} A_s f(X_{s-}) \, ds \right)
	\end{equation*}
	for any $t \geq 0$ and $f \in C_b^2(\mbb{R}^d)$. 
\end{enumerate}
\end{lem}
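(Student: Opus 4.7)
The starting point is the defining property of the semimartingale characteristics recalled in Section~\ref{pre}: since $(b_s,Q_s,\nu_s)$ are absolutely continuous differential characteristics of $(X_t)_{t \geq 0}$, for every $f \in C_b^2(\mbb{R}^d)$ the process
\begin{equation*}
M^f_t := f(X_t) - f(x) - \int_0^t A_s f(X_{s-}) \, ds
\end{equation*}
is a local martingale under $\mbb{P}$. Both parts of the lemma therefore reduce to checking that this local martingale is in fact a true martingale (possibly after stopping), so that one may take expectations in the identity $M^f_t - M^f_0 = 0$.

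For~\eqref{app-1-i} I would localize at the exit time $\tau_r$ from $\overline{B(x,r)}$. Since $(X_t)_{t \geq 0}$ is c\`adl\`ag, $X_{s-} \in \overline{B(x,r)}$ for every $s \leq \tau_r$, so the structural assumption~\eqref{app-eq3} together with the local bound $K_{\lceil r \rceil} < \infty$ yields the pointwise estimate
\begin{equation*}
|A_s f(X_{s-})| \leq C \|f\|_{(2)} K_{\lceil r \rceil} \quad \text{on } \{s \leq \tau_r\},
\end{equation*}
where $C$ depends only on the truncation function $h$. The drift and diffusion contributions are immediate from $|b_s|,|Q_s| \leq K_{\lceil r \rceil}$; the non-local contribution is controlled by splitting at the truncation scale, using a second-order Taylor expansion of $f$ around $X_{s-}$ for small jumps and the bounded support of $h$ together with $\|f\|_\infty$ for large jumps, which together bound the integrand by $C\|f\|_{(2)} \min\{1,|y|^2\}$ and hence the integral by $C\|f\|_{(2)} K_{\lceil r \rceil}$. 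Consequently $M^f_{\cdot \wedge \tau_r}$ is uniformly bounded (by $2\|f\|_\infty + C t\|f\|_{(2)} K_{\lceil r \rceil}$), hence a true martingale, and optional stopping at $t \wedge \tau_r$ produces the claimed identity.

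For~\eqref{app-1-ii} I would let $r \to \infty$ in the identity of~\eqref{app-1-i}. Non-explosion is automatic since $(X_t)_{t \geq 0}$ is assumed a semimartingale with values in $\mbb{R}^d$ on the whole of $[0,\infty)$, so $\tau_r \to \infty$ almost surely. Under the global bound $K < \infty$ the previous estimate upgrades to $|A_s f(X_{s-})| \leq C\|f\|_{(2)} K$ uniformly in $(s,\omega)$, whence $\int_0^t |A_s f(X_{s-})|\, ds \leq tC\|f\|_{(2)} K$, and the c\`adl\`ag path regularity of $X$ together with $f \in C_b^2$ let dominated convergence pass both $f(X_{t \wedge \tau_r}) \to f(X_t)$ and $\int_0^{t \wedge \tau_r} A_s f(X_{s-})\, ds \to \int_0^t A_s f(X_{s-})\, ds$ inside the expectation, yielding~\eqref{app-1-ii}.

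The only delicate ingredient is the jump-part bound described above; once it is written out cleanly, the remainder consists of a standard application of optional stopping for bounded local martingales and of dominated convergence, both driven entirely by the uniform bounds supplied by the hypotheses.
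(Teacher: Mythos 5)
Your argument is correct, but it follows a different route than the paper's proof, and the comparison is instructive. You take the local-martingale property of $M^f_t = f(X_t)-f(x)-\int_0^t A_s f(X_{s-})\,ds$ directly from the definition of (absolutely continuous) semimartingale characteristics via $C_b^2$ test functions, and then upgrade the stopped process $M^f_{\cdot\wedge\tau_r}$ to a true martingale by the elementary observation that it is uniformly bounded on compact time intervals (the bound $|A_sf(X_{s-})|\le C\|f\|_{(2)}K_{\lceil r\rceil}$ for a.e.\ $s<\tau_r$, obtained from \eqref{app-eq3} and the Taylor estimate $|f(z+y)-f(z)-\nabla f(z)\cdot h(y)|\le C\|f\|_{(2)}\min\{1,|y|^2\}$, is exactly right). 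The paper instead reconstructs this local martingale explicitly: it invokes the canonical semimartingale representation and It\^o's formula to write $M^f = M^c + M^d$ with a continuous and a purely discontinuous local-martingale part, bounds the expected quadratic variations of the stopped parts using \eqref{app-eq3} and $K_n$, and concludes via the criterion that a local martingale with integrable bracket is a martingale (Protter, Cor.\ II.6.3). Your version is shorter and avoids bracket estimates, at the price of relying on the test-function formulation of characteristics as a black box; the paper's version is self-contained at the level of the canonical decomposition. Your treatment of \eqref{app-1-ii} (non-explosion of c\`adl\`ag $\mathbb{R}^d$-valued paths gives $\tau_r\to\infty$, then dominated convergence under the global bound $K$) is fine and matches the paper's ``set $r=\infty$'' shortcut. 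One technical point you gloss over and the paper addresses explicitly at the outset: $\tau_r$, as the first exit time of a c\`adl\`ag adapted process from a closed ball, need not be an $(\mc{F}_t)$-stopping time; one should first pass to the $\mbb{P}$-augmented filtration, noting (as the paper does, citing Neufeld--Nutz) that $(X_t)_{t\ge0}$ remains a semimartingale with the same characteristics there, so that stopping $M^f$ at $\tau_r$ is legitimate. With that one-line remark added, your proof is complete.
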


\begin{proof}
	First of all, we note that we may replace $\mc{F}_t$ by the augmented filtration $\mc{F}_t^{\mbb{P}}$; this follows from the fact that $(X_t,\mc{F}_t^{\mbb{P}})$ is a semimartingale (on the completed probability space) which has almost surely the same characteristics as $(X_t,\mc{F}_t)_{t \geq 0}$, cf.\ \cite[Proposition 2.2]{nutz2}. Since the first exit time $\tau_r$ is an $\mc{F}_t^{\mbb{P}}$-stopping time, this allows us to use standard stopping techniques. By \cite[Theorem II.2.34]{jacod}, the semimartingale $(X_t)_{t \geq 0}$ has a canonical representation of the form \begin{equation*}
		X_t = \int_0^t b_s \, ds + X_t^{c} + X_t^{d} + \int_0^t \!\! \int (y-h(y)) \, \mu^X(dy,ds)
	\end{equation*}
	where $(X_t^{c})_{t \geq 0}$ is the continuous local martingale part of $(X_t)_{t \geq 0}$, $\nu_s(dy) \, ds$ is the compensator of the jump measure $\mu^X(dy,ds)$ and \begin{equation*}
		X_t^{d} = \int_0^t\!\! \int h(y) \, (\mu^X(dy,ds)-\nu_s(dy) \, ds)
	\end{equation*}
	is the purely discontinuous local martingale part. An application of It\^o's formula for semimartingales, see e.\,g.\ \cite[Theorem II.2.42]{jacod}, shows that \begin{align*}
		f(X_t)-f(X_0)- \int_0^t A_s f(X_{s-}) \, ds
		&= M_t^c + M_t^d
	\end{align*}
	where \begin{align*}
		M_t^c &:= \int_0^t \nabla f(X_{s-}) \, dX_s^{c,\mbb{P}} \\
		M_t^d &:= \int_0^t \!\! \int \big(f(X_{s-}+h(y))-f(X_{s-})\big) \, (\mu^X(ds,dy)-\nu_s(dy) \, ds)
	\end{align*}
	are local martingales. By \eqref{app-eq3} the quadratic variation of the stopped processes $(M_{t \wedge \tau_r}^c)_{t \geq 0}$ and $(M_{t \wedge \tau_r}^d)_{t \geq 0}$ satisfy \begin{align*}
		\mbb{E}[M_{\tau_r}^c]_t &\leq t \|\nabla f\|_{\infty}^2  \sup_{\alpha \in I} \sup_{|z-x| \leq r} |Q_{\alpha}(z)|^2 \\
		\mbb{E}[M_{\tau_r}^c]_t &\leq Ct \|f\|_{(2)} \sup_{\alpha \in I} \sup_{|z-x| \leq r} \int_{y \neq 0} \min\{1,|y|^2\} \, \nu_{\alpha}(z,dy)
	\end{align*}
	for a suitable constant $C>0$. If the constant $K_n$, defined in \eqref{app-1-i}, is finite for each $n \in \mbb{N}$, then the expressions on the right-hand side are finite, and \cite[Corollary II.6.3]{protter} gives that $(M_{t \wedge \tau_r}^c)_{t \geq 0}$ and $(M_{t \wedge \tau_r}^d)_{t \geq 0}$ are martingales which proves \eqref{app-1-i}. For \eqref{app-1-ii} we can use a very similar reasoning (formally we can set $r:=\infty$).
\end{proof}

\begin{lem} \label{app-3}
	Let $\psi_{\alpha}: \mbb{R}^d \to \mbb{C}$, $\alpha \in I$, be a family of continuous negative definite functions with characteristics $(b_{\alpha},Q_{\alpha},\nu_{\alpha})$, $\alpha \in I$, with respect to a truncation function $h$. If the family $(b_{\alpha},Q_{\alpha},\nu_{\alpha})$ is uniformly bounded, i.\,e.\ \begin{equation*}
		M:= \sup_{\alpha \in I} \left( |b_{\alpha}| + |Q_{\alpha}| + \int_{y \neq 0} \min\{1,|y|^2\} \, \nu_{\alpha}(dy) \right)<\infty,
	\end{equation*}
	and satisfies the tightness condition \begin{equation*}
		\lim_{R \to \infty} \sup_{\alpha \in I} \int_{|y|>R} \, \nu_{\alpha}(dy)=0,
	\end{equation*}
	then \begin{equation*}
		\lim_{r \to 0} \sup_{\alpha \in I} \sup_{|\xi| \leq r} |\psi_{\alpha}(\xi)|=0.
	\end{equation*}
\end{lem}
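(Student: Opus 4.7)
\textbf{Proof proposal for Lemma~\ref{app-3}.}
The plan is to decompose $\psi_{\alpha}(\xi)$ into its drift, diffusion and jump contributions and to show that each piece tends to $0$ as $\xi \to 0$, uniformly in $\alpha \in I$. The drift and diffusion parts are handled immediately: $|b_{\alpha} \cdot \xi| \leq M|\xi|$ and $|\xi \cdot Q_{\alpha} \xi| \leq M|\xi|^2$ by the uniform boundedness hypothesis, so both vanish uniformly as $|\xi| \to 0$. The entire argument therefore reduces to estimating the jump integral $J_{\alpha}(\xi) := \int_{y \neq 0} (1 - e^{iy \cdot \xi} + i \xi \cdot h(y)) \, \nu_{\alpha}(dy)$ uniformly in $\alpha$.

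The core idea is to split the domain of integration according to the size of $|y|$, choosing $\delta_0 \in (0,1]$ small enough that $h(y) = y$ on $\{|y| \leq \delta_0\}$, and a further radius $R > \delta_0$ that will absorb the tail via the tightness assumption. I would estimate as follows. On $\{0 < |y| \leq \delta_0\}$, where $h(y) = y$, a Taylor expansion yields $|1 - e^{iy \cdot \xi} + iy \cdot \xi| \leq \tfrac12 |y|^2 |\xi|^2$, so this piece is bounded by $\tfrac12 M |\xi|^2$. On $\{\delta_0 < |y| \leq R\}$ I would combine the crude bounds $|1 - e^{iy \cdot \xi}| \leq |y| |\xi| \leq R|\xi|$ and $|h(y) \cdot \xi| \leq \|h\|_{\infty} |\xi|$, together with the observation that $\nu_{\alpha}(\{|y| > \delta_0\}) \leq \delta_0^{-2} M$; this produces a bound of the form $C(R, \delta_0) M |\xi|$. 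On $\{|y| > R\}$ the integrand is bounded by $2 + \|h\|_{\infty} |\xi|$, giving $|J_{\alpha}(\xi)|$-contribution at most $(2 + \|h\|_{\infty} |\xi|) \sup_{\alpha \in I} \int_{|y| > R} \nu_{\alpha}(dy)$.

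Given $\varepsilon > 0$, I would first invoke the tightness hypothesis to fix $R$ so large that $\sup_{\alpha} \int_{|y| > R} \nu_{\alpha}(dy) \leq \varepsilon / (2(2 + \|h\|_{\infty}))$; this makes the tail piece $\leq \varepsilon/2$ whenever $|\xi| \leq 1$, uniformly in $\alpha$. With $R$ and $\delta_0$ now fixed, the remaining contributions (drift, diffusion, and the two integrals over $\{|y| \leq R\}$) are each of order $|\xi|$ or $|\xi|^2$ with constants independent of $\alpha$, hence can be made $\leq \varepsilon/2$ by choosing $r_0 > 0$ sufficiently small and restricting to $|\xi| \leq r_0$. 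Combining the two bounds yields $\sup_{\alpha} \sup_{|\xi| \leq r_0} |\psi_{\alpha}(\xi)| \leq \varepsilon$, which is the claim.

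The only delicate step is the tail estimate on $\{|y| > R\}$: away from $0$ one loses the quadratic cancellation $|1 - e^{iy \cdot \xi} + iy \cdot \xi| = O(|y\xi|^2)$ and is stuck with $O(1)$ integrands, so uniform smallness in $\alpha$ cannot come from the factor $|\xi|$ alone and must be extracted from the tightness of $\{\nu_{\alpha}\}_{\alpha \in I}$ at infinity. This is precisely where the tightness hypothesis enters, and where the standard Lévy--Khintchine continuity argument would fail without it.
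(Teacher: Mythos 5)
Your proof is correct, and it is essentially the standard argument: the paper itself does not write out a proof but simply defers to the proof of Theorem 4.4 in Schilling's conservativeness paper \cite{cons} (noting that the argument carries over from $I=\mbb{R}^d$ to an arbitrary index set), and that argument rests on the same L\'evy--Khintchine splitting, with the tightness at infinity absorbing the large-jump part where the integrand is only $O(1)$, and uniform boundedness controlling the drift, diffusion, small-jump and medium-jump parts by $O(|\xi|)$. Your write-up just makes this explicit and self-contained, correctly identifying the tail estimate as the only place where tightness (rather than mere boundedness) is needed.
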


For the particular case $I=\mbb{R}^d$ the result follows from \cite[Proof of Theorem 4.4]{cons}; since the proof does not rely on the topological structure of the Euclidean space, we may replace $\mbb{R}^d$ by an arbitrary index set $I$.

\end{document}